\documentclass[hidelinks,onefignum,onetabnum]{siamart220329}


\usepackage{lipsum}
\usepackage{amsfonts,amssymb,tikz,amsmath,subfigure}
\usepackage{graphicx}
\usepackage{epstopdf}
\usepackage[all,import]{xy}
\usepackage{multirow}
\usepackage{tikz-cd}
\usetikzlibrary{matrix, calc, arrows}
\usepackage[normalem]{ulem}
\usepackage{smartdiagram}
\useunder{\uline}{\ul}{}

\usepackage{algorithmic}
\ifpdf
  \DeclareGraphicsExtensions{.eps,.pdf,.png,.jpg}
\else
  \DeclareGraphicsExtensions{.eps}
\fi

\usetikzlibrary{arrows.meta}
\usetikzlibrary{er,positioning}


\renewcommand{\vec}[1]{\mbox{\boldmath \small $#1$}}

\usepackage{graphicx}

\usepackage{latexsym}
\usepackage{bm}
\usepackage{amsmath}
\usepackage{amssymb}
\usepackage{color,tikz}
\usetikzlibrary{matrix}
\usepackage{cases}
\usetikzlibrary{shapes,arrows}
\usepackage{makecell}
\usepackage{multicol, multirow}
\usepackage{epsfig}
\usepackage{hyperref}
\usepackage{graphicx}
\usepackage{euscript}
\usepackage{epsfig}
\usepackage{subfigure} 
\usepackage{caption}



\renewcommand{\vec}[1]{\mbox{\boldmath$#1$}}
\newcommand{\vol}{\ensuremath{\mathrm{vol}}}
\newcommand{\sign}{\ensuremath{\mathrm{sign}}}
\newcommand{\sgn}{\ensuremath{\mathrm{Sgn}}}
\newcommand{\median}{\ensuremath{\mathrm{median}}}
\newcommand{\set}[1]{\{ #1\}}
\newcommand{\dif}{\ensuremath{\mathrm{d}}}

\DeclareMathOperator*{\argmax}{\ensuremath{\mathrm{argmax\,}}}
\DeclareMathOperator*{\argmin}{\ensuremath{\mathrm{argmin\,}}}
\DeclareMathOperator*{\nen}{\ensuremath{\mathrm{NEN\,}}}
\DeclareMathOperator*{\tc}{\ensuremath{\mathrm{TC\,}}}

\newsiamremark{remark}{Remark}
\newsiamremark{hypothesis}{Hypothesis}
\crefname{hypothesis}{Hypothesis}{Hypotheses}
\newsiamthm{claim}{Claim}
\newsiamremark{defn}{Definition}
\newsiamremark{com}{Note}
\newsiamremark{cor}{Corollary}
\newsiamremark{pro}{Proposition}
\newsiamremark{example}{Example}


\headers{A simple inverse power method for balanced graph cut}{S. Shao and C. Yang}

\title{A simple inverse power method for balanced graph cut\thanks{Submitted to the editors on \today.
\funding{This work was funded by the National Key R \& D Program of China (No. 2022YFA1005102) and
the National Natural Science Foundation of China (Nos. 12325112, 12288101).}}}

\author{Sihong Shao\thanks{CAPT, LMAM and School of Mathematical Sciences, Peking University, Beijing 100871, China 
  (\email{sihong@math.pku.edu.cn}). 
  }
\and Chuan Yang\thanks{School of Mathematical Sciences, Peking University, Beijing 100871, China
  (\email{chuanyang@pku.edu.cn}).
  }
}

\usepackage{amsopn}


\ifpdf
\hypersetup{
	pdftitle={A simple inverse power method for balanced graph cut},
	pdfauthor={S. Shao and C. Yang}
}
\fi




\begin{document}
	
	\maketitle
	
	\begin{abstract}
		The existing inverse power ($\mathbf{IP}$) method for solving the balanced graph cut lacks local convergence and its inner subproblem requires a nonsmooth convex solver. To address these issues, we develop a simple inverse power ($\mathbf{SIP}$) method using a novel equivalent continuous formulation of the balanced graph cut, and its inner subproblem allows an explicit analytic solution, which is the biggest advantage over $\mathbf{IP}$ and constitutes the main reason why we call it \emph{simple}. By fully exploiting the closed-form of the inner subproblem solution, we design a boundary-detected subgradient selection with which $\mathbf{SIP}$ is proved to be locally converged. We show that $\mathbf{SIP}$ is also applicable to a new ternary valued $\theta$-balanced cut which reduces to the balanced cut when $\theta=1$. When $\mathbf{SIP}$ reaches its local optimum, we seamlessly transfer to solve the $\theta$-balanced cut within exactly the same iteration algorithm framework and thus obtain $\mathbf{SIP}$-$\mathbf{perturb}$ --- an efficient local breakout improvement of $\mathbf{SIP}$, which transforms some ``partitioned" vertices back to the ``un-partitioned" ones through the adjustable $\theta$. Numerical experiments on G-set for Cheeger cut and Sparsest cut demonstrate that $\mathbf{SIP}$ is significantly faster than $\mathbf{IP}$ while maintaining approximate solutions of comparable quality, and $\mathbf{SIP}$-$\mathbf{perturb}$ outperforms \texttt{Gurobi} in terms of both computational cost and solution quality.
	\end{abstract}
	
	\begin{keywords}
		Cheeger cut,
		Sparsest cut,
		inverse power method,
		subgradient selection,
		fractional programming
	\end{keywords}
	
	\begin{MSCcodes}
		90C27, 
		05C50, 
		90C32, 
		35P30, 
		90C26 
	\end{MSCcodes}
	
	\section{Introduction}
	\label{sec:intro}

	We consider the following balanced cut problem defined on an undirected graph $G=(V,E,w,\mu)$ 
	\begin{equation}
		\label{prob:balance}
		h(G)=\min\limits_{S\subset V,S\not\in\{\emptyset,V\}} \frac{|E( S,S^c)|}{\min\{\vol(S),\vol(S^c)\}},
	\end{equation}
	where $V=\{1, 2, \dots, n\}$ gives the vertex set, 
	$\mu=(\mu_1, \mu_2, \dots, \mu_n)$ with $\mu_i$ being a positive weight on $i\in V$,
	$\vol(S)=\sum_{i\in S}\mu_i$ calculates the volume of $S\subset V$,
	$E$ is the edge set, 
	$W=(w_{ij})_{n\times n}$  with $w_{ij}$ being a positive weight on $\{i,j\}\in E$ and vanishing on $\{i,j\}\notin E$,
	$E(S,T)$ collects the set of the edges across $S\subset V$ and $T\subset V$ with the amount of $|E(S,T)|=\sum_{\{i,j\}\in E(S,T)}w_{ij}$,
	$S^c=V\backslash S$ denotes the complement of $S$ in $V$ and $(S,S^c)$ presents a binary cut of $G$.
	Two typical examples of Eq.~\eqref{prob:balance} are Cheeger cut where $\mu_i = d_i$ holds for $\forall\, i\in V$ with $d_i=\sum_{j:\{i,j\}\in E}w_{ij}$ being the degree of the $i$-th vertex \cite{Alon1986}, and Sparest cut where $\mu_i = 1$ holds for $\forall\, i\in V$ \cite{arora2009expander}.  Both problems are NP-hard \cite{hochbaum2013polynomial} and have various real-world applications in the fields of clustering \cite{bresson2013multiclass,szlam2010total,HeinBuhler2010}, community detection \cite{van2016local}, and computer vision \cite{shi2000normalized}. There exist multiple heuristic algorithms that manage to produce high-quality solutions within reasonable computational time, including the max-flow quotient-cut improvement algorithm \cite{lang2004flow}, the breakout tabu search \cite{lu2019stagnation} and the hybrid evolutionary algorithm \cite{lu2020hybrid}. Recently, developing efficient algorithms with continuous flavor, as an alternative, has also attracted more and more attention \cite{HeinBuhler2010,szlam2010total,Chang2015} and this work falls into such category.

	The first well-known continuous equivalent formulation of the balanced cut problem~\eqref{prob:balance} reads \cite{Chung1997,chang2021lovasz}
	\begin{equation}
		\label{conti-prob:balance}
		h(G)=\min\limits_{\text{nonconstant~}\vec x\in\mathbb{R}^n}\frac{I(\vec x)}{N(\vec x)},
	\end{equation}
	where
	\[
	I(\vec x) =\sum_{\{i,j\}\in E}w_{ij}|x_i-x_j|, \quad 
	N(\vec x) =\min_{c\in\mathbb{R}}\sum_{i\in V}\mu_i|x_i-c|.
	\]
	Formally applying a Lagrange multiplier technique equipped with the subgradient into the above continuous equivalent formulation yields the so-called $1$-Laplacian eigenproblem and the resulting second smallest eigenvalue happens to be $h(G)$ \cite{Chang2015}.
	Accordingly, a three-step inverse power ($\mathbf{IP}$) method starting from an initial data $\vec x^0\in\mathbb{R}^n$ \cite{HeinBuhler2010,Chang2015}
	\begin{subequations}
		\label{iter1-cheeger}
		\begin{numcases}{}
			\vec x^{k+1}=\mathop{\argmin}\limits_{\|\vec x\|_1\leq 1} \{I(\vec x) - r^k\langle\vec x,\vec v^k\rangle\}, 
			\label{eq:twostep_x2}
			\\
			r^{k+1}=I(\vec x^{k+1})/N(\vec x^{k+1}),
			\label{eq:twostep_r2}
			\\
			\vec v^{k+1}\in\partial N(\vec x^{k+1}),
			\label{eq:twostep_s2}
		\end{numcases}
	\end{subequations}
	was proposed to approximate $h(G)$ after introducing a subgradient relaxation to the equivalent two-step Dinkelbach iteration for Eq.~\eqref{conti-prob:balance} \cite{dinkelbach1967nonlinear}. Here $\langle\cdot,\cdot\rangle$ denotes the standard inner product in $\mathbb{R}^n$, $\|\vec x\|_1 = \sum_{i\in V}|x_i|$ and ``$\partial$" calculates the subgradient. 
	However, solving the inner subproblem \eqref{eq:twostep_x2} requires additional computational burden and potentially results in time-consuming calculations at each iteration, thereby bothering the researchers to design extra algorithms \cite{hein2011beyond} or to use extra optimization solvers (e.g. CVX \cite{Chang2015}). To compensate this, we first propose an alternative equivalent continuous formulation for the balanced cut problem~\eqref{prob:balance}
	\begin{equation}
		\label{conti2-prob:balance}
		h(G)=\min\limits_{\text{nonconstant~}\vec x\in\mathbb{R}^n}\frac{e\|\vec x\|_{\infty}-I^+(\vec x)}{N(\vec x)},
	\end{equation}
	and then obtain a new three-step iteration for approximating $h(G)$
	\begin{subequations}
		\label{iter1-cheeger_2}
		\begin{numcases}{}
			\vec x^{k+1}=\mathop{\argmin}\limits_{\|\vec x\|_1 \le 1} \{\|\vec x\|_{\infty} - \langle\vec x,\vec s^k\rangle\}, 
			\label{eq:twostep_x2_2}
			\\
			r^{k+1}= B(\vec x^{k+1}), 
			\label{eq:twostep_r2_2}
			\\
			\vec s^{k+1}\in\partial H_{r^{k+1}}(\vec x^{k+1}),
			\label{eq:twostep_s2_2}
		\end{numcases}
	\end{subequations}
	where $e$ counts all the degrees in $V$, $\|\vec x\|_{\infty}=\max\{|x_1|,\ldots,|x_n|\}$, 
	$B(\vec x)$ denotes the objective function of Eq.~\eqref{conti2-prob:balance}, and 
	\begin{equation}
		\label{eq:Hr}
		I^+(\vec x)=\sum_{\{i,j\}\in E}w_{ij}|x_i+x_j|, \quad H_r(\vec x)=\frac{1}{e} (I^+(\vec x)+rN(\vec x))\text{~for~} r\ge 0.
	\end{equation}
	Compared with Eq.~\eqref{eq:twostep_x2} in $\mathbf{IP}$, the inner subproblem \eqref{eq:twostep_x2_2} has an explicit analytic solution \cite{SZZ2018}.
	That is, no any extra solver is needed in the three-step inverse power iteration \eqref{iter1-cheeger_2} and thus
	we name it the simple inverse power ($\mathbf{SIP}$) method. 
	In particular, we will prove that the objective function values $\{r^k\}$ of $\mathbf{SIP}$ strictly decrease to a local optimum within finite iterations when the subgradients in Eq.~\eqref{eq:twostep_s2_2} are chosen in a boundary-detected
manner.  Numerical experiments demonstrate that $\mathbf{SIP}$ outperforms $\mathbf{IP}$ in both solution quality and computing time 	 
	on the majority of graphs in G-set. This constitutes the first contribution of this work.

	In order to further improve the solution quality, we propose a ternary-valued generalization of Eq.~\eqref{prob:balance} --- 
	the $\theta$-balanced cut problem for $\theta\in[0,1]$, 
	\begin{equation}
		\label{prob:theta-balance}
		h_{\theta}(G)=\min\limits_{(V_1,V_2)\in \tc(V)} \frac{\theta\sum_{i\in (V_1\cup V_2)^c}d_i+2|E( V_1,V_2)|}{\sum\limits_{X\in\{V_1,V_2\}}\min\limits_{Y\in\{X,X^c\}}\vol(Y)},
	\end{equation}
	where 
	\begin{equation}
		\label{ternary-cut}
		\tc(V):=\left\{(V_1,V_2) \big| V_1\cap V_2=\emptyset,\,\emptyset\neq V_1\cup V_2\subseteq V,\,V_1^c\neq\emptyset,\, V_2^c\neq\emptyset\right\}
	\end{equation}
	gives the set of ``partitioned" vertex subset pairs, 
	and the  ``un-partitioned" part behind each pair is the complement of their union in $V$.
	Obviously, $h_{\theta}(G)$ possesses the same objective function as $h(G)$ if $V_1\cup V_2=V$. 
	More importantly, following the same way from the discrete form \eqref{prob:balance} of $h(G)$ to the continuous form \eqref{conti2-prob:balance} and to the approximate iteration form \eqref{iter1-cheeger_2}, we are still able to obtain an equivalent continuous formulation for the $\theta$-balanced cut,  
	\begin{equation}
		\label{conti-theta:balance}
		h_{\theta}(G) =\min\limits_{\text{nonconstant~}\vec x\in\mathbb{R}^n}\frac{\theta e\|\vec x\|_{\infty}+(1-\theta)\|\vec x\|_{1,d}-I^+(\vec x)}{N(\vec x)},
	\end{equation}
	and thus a simple inverse power method for approximating $h_{\theta}(G)$, denoted as $\mathbf{SIP}_{\theta}$ (see Eq.~\eqref{iter2-cheeger_2}). Here $\|\vec x\|_{1,d}=\sum_{i\in V}d_i|x_i|$.  That is, within exactly the same iteration algorithm framework, we may approximate $h(G)$ and $h_{\theta}(G)$ via $\mathbf{SIP}$ and $\mathbf{SIP}_{\theta}$, respectively, while sharing the same objective function in the subgradient selection phase (see Eqs.~\eqref{eq:twostep_s2_2} and \eqref{eq2:twostep_s2_2}). 
	Actually, Eq.~\eqref{conti-theta:balance} only introduces an extra $\theta$-dependent term in the objective function, $\theta e\|\vec x\|_{\infty}+(1-\theta)\|\vec x\|_{1,d}$, the correspondence of which in Eq.~\eqref{conti2-prob:balance} is $e\|\vec x\|_{\infty}$, 
	and thus using the fact $\forall\, \vec x\in\mathbb{R}^n, e\|\vec x\|_{\infty} \ge \|\vec x\|_{1,d}$ we have that $h_{\theta}(G)$ is gradually raised from $0$ to $h(G)$ as $\theta$ increases from $0$ to $1$. Namely, $h_0(G)\equiv 0$ and $h_1(G)\equiv h(G)$. 
	In this regard, we may see $\mathbf{SIP}_{\theta}$ as a theta method dynamics in a similar constructive manner to strengthen $\mathbf{SIP}$ without extra excessive calculations, thus revolved operations of $\mathbf{SIP}$ and $\mathbf{SIP}_{\theta}$ may achieve an improved simple inverse power perturbation method for approximating $h(G)$, dubbed $\mathbf{SIP}$-$\mathbf{perturb}$, the flowchart of which is shown in Fig.~\ref{flowchart}. When $\mathbf{SIP}$ fails to decrease the objective function values, some ``partitioned" vertices in its binary-valued output are transformed back to the ``un-partitioned" ones by $\mathbf{SIP}_{\theta}$, and thus we may expect that $\mathbf{SIP}$-$\mathbf{perturb}$ is an efficient local breakout improvement of $\mathbf{SIP}$. Here the parameter $\theta$ is used for allocating proportion of the ``un-partitioned" vertices in $V$: A smaller $\theta$ yields a larger difference between the current local-optimal binary cut and the undergoing ternary one. Our numerical results show that $\mathbf{SIP}$-$\mathbf{perturb}$ provides higher-quality solutions within a relatively short period of time when compared to the well-known solver \texttt{Gurobi} on  almost all graphs in G-set,
	thereby demonstrating its expansion of the search area as well as its enhancement to the diversification over $\mathbf{SIP}$.
	This constitutes the second contribution of this work.

	\begin{figure}[htbp]
		\centering
		\tikzstyle{pt} = [point]
		\tikzstyle{decision} = [diamond, draw, text width=4em, aspect=1.2, text badly centered, node distance=3cm, inner sep=0pt]
		\tikzstyle{decision2} = [diamond, draw, text width=6em, aspect=1.2, text badly centered, node distance=3cm, inner sep=0pt]
		\tikzstyle{block} = [rectangle, draw, 
		text width=8em, text centered, rounded corners, minimum height=4em]
		\tikzstyle{io} = [rectangle, draw, 
		text width=4em, text centered, rounded corners, minimum height=2em]
		\tikzstyle{line} = [draw, thick, -latex']
		\tikzstyle{cloud} = [draw, ellipse,fill=red!20, node distance=2.5cm,
		minimum height=2em]
		\tikzstyle{every node}=[scale=0.8]
		\begin{tikzpicture}
			\node [io] (init) {Start};
			
			\node [block, below of=init, node distance=1.8cm] (itersip) {$\mathbf{SIP}$: approximate $h(G)$};
			\node [decision2,  below of=itersip,node distance=2.5cm] (keep_1) {Is $B(\vec x)$  unchanged?};
			\coordinate [right of=keep_1, node distance=3cm] (pt1);
			\node [decision, below of=keep_1, node distance=2.7cm] (ext1) {Exceed $T_{\theta}$?};
			\node [io, right of=ext1,node distance=3cm] (ending) {Stop};
			\node [block, below of=ext1, node distance=2.1cm] (itertheta) {$\mathbf{SIP}_{\theta}$: approximate $h_{\theta}(G)$};
			\node [decision2,  below of=itertheta,node distance=2.5cm] (keep_2) {Is $T(\vec x)$  unchanged?};
			\coordinate [right of=keep_2, node distance=3cm] (pt2);
			\coordinate [left of=keep_2, node distance=3cm] (pt3);
			
			\path [line](init)--(itersip);
			\path [line] (itersip)--(keep_1);
			\draw [thick](keep_1) -- node [above, color=black] {No}(pt1);
			\path [line] (pt1)|-(itersip);
			\path [line] (keep_1)--node [anchor=east, color=black] {Yes} (ext1);
			\path [line] (ext1)-- node[above] {Yes} (ending);
			\path [line] (ext1)--node [anchor=east, color=black] {No} (itertheta);
			\path [line] (itertheta)--(keep_2);
			\draw [thick](keep_2) -- node [above, color=black] {No}(pt2);
			\path [line] (pt2)|-(itertheta);
			\draw [thick](keep_2) -- node [above, color=black] {Yes}(pt3);
			\path [line] (pt3)|-(itersip);

			
		\end{tikzpicture}
		\captionsetup{singlelinecheck=off}
		\caption[.]{\small The flowchart of $\mathbf{SIP}$-$\mathbf{perturb}$ --- an efficient local breakout improvement of $\mathbf{SIP}$ for the balanced cut problem~\eqref{prob:balance}. $\mathbf{SIP}$-$\mathbf{perturb}$ takes turns applying two iterative schemes $\mathbf{SIP}$ \eqref{iter1-cheeger_2} and $\mathbf{SIP}_{\theta}$ \eqref{iter2-cheeger_2}, where $\mathbf{SIP}_{\theta}$ serves as the local breakout technique for $\mathbf{SIP}$. $B(\vec x)$ and $T(\vec x)$ denote the objective functions of the equivalent continuous formulations for $h(G)$ \eqref{conti2-prob:balance} and $h_{\theta}(G)$ \eqref{conti-theta:balance}, respectively. $T_{\theta}$ gives the total rounds of applying $\mathbf{SIP}_{\theta}$. 
			That is,
			\begin{align*}
				\mathbf{SIP}\text{-}\mathbf{perturb}=\underbrace{(\mathbf{SIP}\rightarrow\mathbf{SIP}_{\theta})\rightarrow\cdots\rightarrow(\mathbf{SIP}\rightarrow\mathbf{SIP}_{\theta})}_{T_{\theta}\text{ rounds}}\rightarrow\mathbf{SIP}.
			\end{align*}
			Finally, $\mathbf{SIP}$-$\mathbf{perturb}$ outputs the best solution generated from all $T_{\theta}+1$ rounds of $\mathbf{SIP}$.} \label{flowchart}
	\end{figure}
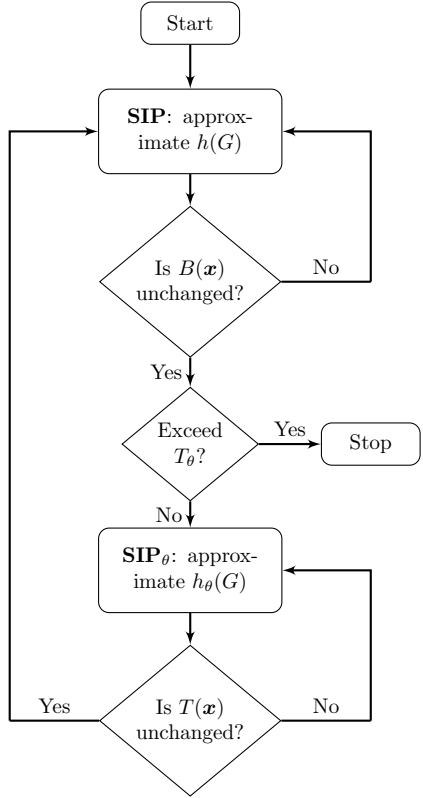

	The rest of paper is organized as follows. Section~\ref{sec:prob} proves the equivalent continuous formulations: Eq.~\eqref{conti2-prob:balance} for $h(G)$ and  Eq.~\eqref{conti-theta:balance} for $h_{\theta}(G)$, as well as their Dinkelbach iterative schemes. 
	In Section~\ref{sec:algorithm}, $\mathbf{SIP}$ and $\mathbf{SIP}_{\theta}$ are detailed with the analytical solution to the inner subproblem, carefully designed subgradient selection, and 	the local convergence analysis. Section~\ref{sec:experiments} conducts numerical experiments on G-set,  and Section~\ref{sec:conclusion} rounds the paper off with our conclusions.
	Throughout this paper, we use a bold black lowercase letter to denote a vector,  say $\vec x$, $\vec s$, $\vec b$, and write its $j$-th component as $x_j$, $s_j$, $b_j$, i.e., the corresponding lowercase letter in regular style equipped with the subscript $j$. When entering into an iterative scheme, we will add a superscript, say $k$, to specify that the vector $\vec x^k$ or its $j$-th component $x_j^k$ is at the $k$-th iteration.

	\section{Equivalent continuous formulations}
	\label{sec:prob}
	For any nonempty subset $S\subset V$, we define an indicative vector $\vec 1_S$ that each component $(\vec 1_S)_i$ equals to $1$ if $i\in S$ and $0$ if $i\in S^c$, thus the ternary vector $\vec 1_{V_1,V_2}=\vec 1_{V_1}-\vec 1_{V_2}\in\{-1,0,1\}^n$ satisfying $(V_1,V_2)\in\tc(V)$ (see Eq.~\eqref{ternary-cut}) is the vector-based representation of a nonconstant ternary cut composed of the ``partitioned" parts $V_1$, $V_2$ and the ``un-partitioned" part $(V_1\cup V_2)^c$, where ``ternary" may degenerate into ``binary" in certain cases, such as $V_1\cup V_2=V$, and ``nonconstant" signifies that ``ternary" can not reduce to ``unary"
	and not all the components of $\vec 1_{V_1,V_2}$ are equal. Note in passing that the nonconstant constraint did not appear in the discussion for maxcut \cite{SZZ2018} since the trivial zero cut value reaches at a constant vector. 


	We first prove the  equivalent continuous formulations:
	Eq.~\eqref{conti2-prob:balance} for $h(G)$ and Eq.~\eqref{conti-theta:balance} for $h_{\theta}(G)$. 
	Before that, we need a lemma related to the set-pair Lov{\'a}sz extension of a given set-pair function $f: \mathcal{P}_2(V)\rightarrow[0,+\infty)$
	with $\mathcal{P}_2(V)=\{(V_1, V_2): V_1, V_2 \subset V, V_1 \cap V_2=\varnothing\}$, denoted by $f^L$, which reads \cite{chang2021lovasz}
	\begin{equation}
		\label{eq:lovasz}
		f^L(\vec{x})=\int_0^{\|\vec x\|_{\infty}} f\left(V_t^{+}(\vec{x}), V_t^{-}(\vec{x})\right) \dif t,
	\end{equation}
	where $V_t^{ \pm}(\vec{x})=\left\{i \in V: \pm x_i>t\right\}$.

	\begin{lemma}\label{lem:subgradient}
		For any $\vec x\in\mathbb{R}^n$ and its induced ternary vector $\vec y=\vec 1_{V_0^+(\vec x),V_0^-(\vec x)}$, $\partial f^L(\vec x)\subseteq\partial f^L(\vec y)$ holds for any convex set-pair Lov{\'a}sz extension $f^L$ given in Eq.~\eqref{eq:lovasz}.
	\end{lemma}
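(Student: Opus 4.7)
The plan is to exploit positive homogeneity of $f^L$, which the integral representation \eqref{eq:lovasz} makes immediate: the substitution $t\mapsto\lambda t$ for $\lambda>0$ gives $f^L(\lambda\vec x)=\lambda f^L(\vec x)$. For a positively homogeneous convex function, $g\in\partial f^L(\vec x)$ is equivalent to the pair of conditions
\begin{equation*}
  f^L(\vec z)\geq\langle g,\vec z\rangle\quad\forall\,\vec z\in\mathbb{R}^n,\qquad\text{and}\qquad f^L(\vec x)=\langle g,\vec x\rangle,
\end{equation*}
obtained by testing the subgradient inequality at $\vec z=\lambda\vec x$ for $\lambda\in(0,\infty)$. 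The first (``support-function'') inequality is independent of the base point and thus transfers automatically from $\vec x$ to $\vec y$; the entire content of the lemma therefore reduces to proving the Euler identity $f^L(\vec y)=\langle g,\vec y\rangle$.

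To establish this, I would decompose $\vec x$ along its level sets. Assuming $\vec x\neq\vec 0$ (the zero case is trivial since then $\vec y=\vec x$), let $t_1>t_2>\cdots>t_m>0=:t_{m+1}$ enumerate the distinct nonzero values of $\{|x_i|:i\in V\}$, and set $A_k^{\pm}=\{i\in V:\pm x_i\geq t_k\}$, which are disjoint because $t_k>0$. A componentwise check gives the telescoping identity $\vec x=\sum_{k=1}^m(t_k-t_{k+1})\,\vec 1_{A_k^+,A_k^-}$. Moreover, the integrand in \eqref{eq:lovasz} is constant and equal to $f(A_k^+,A_k^-)$ on each interval $(t_{k+1},t_k)$, so $f^L(\vec x)=\sum_{k=1}^m(t_k-t_{k+1})\,f(A_k^+,A_k^-)$; the same calculation applied to any single disjoint pair $(A^+,A^-)$ yields $f^L(\vec 1_{A^+,A^-})=f(A^+,A^-)$.

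Substituting the decomposition into the Euler identity at $\vec x$ produces
\begin{equation*}
  0=f^L(\vec x)-\langle g,\vec x\rangle=\sum_{k=1}^m(t_k-t_{k+1})\bigl[f^L(\vec 1_{A_k^+,A_k^-})-\langle g,\vec 1_{A_k^+,A_k^-}\rangle\bigr].
\end{equation*}
Every coefficient $t_k-t_{k+1}$ is strictly positive, while every bracket is nonnegative by the support-function inequality applied to $\vec 1_{A_k^+,A_k^-}$. Each bracket therefore vanishes; in particular, at $k=m$ one has $A_m^{\pm}=V_0^{\pm}(\vec x)$ so $\vec 1_{A_m^+,A_m^-}=\vec y$, and the $k=m$ bracket is exactly $f^L(\vec y)-\langle g,\vec y\rangle=0$. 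Combined with the support-function inequality, this yields $g\in\partial f^L(\vec y)$, as required.

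The only genuine obstacle I anticipate is routine bookkeeping: verifying the telescoping decomposition of $\vec x$ and the piecewise constancy of the integrand in \eqref{eq:lovasz} on the intervals $(t_{k+1},t_k)$. Once these are in place, the crucial ``zero sum of nonnegatives with strictly positive coefficients forces each summand to vanish'' step is immediate, and the passage from $\vec x$ to the sign/ternary vector $\vec y$ costs nothing more than reading off the $k=m$ summand.
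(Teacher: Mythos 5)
Your proof is correct. The core mechanism is the same as the paper's --- both reduce the lemma to the Euler identity $f^L(\vec y)=\langle g,\vec y\rangle$ and obtain it by sandwiching $f^L(\vec y)$ between $\langle g,\vec y\rangle$ and itself using the support-function inequality --- but the route to the sandwich differs. The paper splits $M\vec x=\vec u+\vec y$ into just two sign-aligned pieces and invokes Proposition~2.5 of \cite{chang2021lovasz} to get the additivity $f^L(\vec u+\vec y)=f^L(\vec u)+f^L(\vec y)$, so its argument is short but outsources the key structural fact. You instead derive everything from the integral formula \eqref{eq:lovasz}: the full layer-cake decomposition $\vec x=\sum_k(t_k-t_{k+1})\vec 1_{A_k^+,A_k^-}$, the piecewise constancy of the integrand, and the identity $f^L(\vec 1_{A^+,A^-})=f(A^+,A^-)$, after which ``a zero sum of nonnegative terms with positive coefficients vanishes termwise'' extracts the $k=m$ summand, which is exactly $\vec y$. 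Your version is self-contained and in fact proves more (every level-set indicator $\vec 1_{A_k^+,A_k^-}$, not only $\vec y$, satisfies the Euler identity, hence $\partial f^L(\vec x)\subseteq\partial f^L(\vec 1_{A_k^+,A_k^-})$ for every $k$), at the cost of the bookkeeping you flag, all of which checks out: the telescoping identity, the identification $A_m^{\pm}=V_0^{\pm}(\vec x)$, and the homogeneity-based characterization of subgradients (which needs $f^L(\vec 0)=0$ and $\lambda>0$ only, both available here) are all verified correctly.
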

	
	\begin{proof}
		Let $\vec u=M\vec x-\vec y$ with the constant $M$ satisfying $\min\limits_{i\in V_0^+(\vec x)\cup V_0^-(\vec x)}\left\{M|x_i|\right\}=2$. Then for each $i\in V$, $u_i>0$ if $y_i>0$, $u_i<0$ if $y_i<0$, and $u_i=0$ if $y_i=0$. According to Proposition 2.5 of \cite{chang2021lovasz}, for any $\vec v\in\partial f^L(\vec x)$, we have 			
		$$
		\langle\vec y,\vec v\rangle\leq f^L(\vec y)=f^L(\vec u+\vec y)-f^L(\vec u)\leq \langle\vec u+\vec y,\vec v\rangle-\langle\vec u,\vec v\rangle=\langle\vec y,\vec v\rangle,
		$$
		and thus $ f^L(\vec y) = \langle\vec y,\vec v\rangle$ which directly leads to $\vec v\in\partial f^L(\vec y)$. The proof is completed. 
	\end{proof}

	%
	
	It has been revealed in \cite{chang2021lovasz} that the convex one-homogeneous functions: $\|\vec x\|_{\infty}$, $\|\vec x\|_{1,d}$, $I^+(\vec x)$ and $N(\vec x)$ in Eq.~\eqref{conti-theta:balance} are the set-pair Lov{\'a}sz extensions of set-pair functions: $1$, $\sum\limits_{i\in V_1\cup V_2}d_i$, $2|E(V_1, V_1)|+2|E(V_2, V_2)|+|E(V_1\cup V_2, (V_1\cup V_2)^c)|$ and $\sum\limits_{X\in\{V_1, V_2\}}\min\limits_{Y\in\{X,X^c\}}\vol(Y) $ for $(V_1, V_2)\in\mathcal{P}_2(V)$, respectively.  From Lemma~\ref{lem:subgradient}, for any $\vec x\in\mathbb{R}^n$ and its induced ternary vector $\vec y=\vec 1_{V_0^+(\vec x),V_0^-(\vec x)}$, it immediately arrives at
	\begin{align} 
		\partial \|\vec x\|_{\infty} &\subseteq \partial \|\vec y\|_{\infty}, \quad \partial \|\vec x\|_{1,d} \subseteq \partial \|\vec y\|_{1,d},\label{eq:sub_ternary1} \\
		\partial I^+(\vec x) &\subseteq \partial I^+(\vec y), \quad \partial N(\vec x) \subseteq \partial N(\vec y). \label{eq:sub_ternary2}
	\end{align}
	
	We are now ready for proving Eq.~\eqref{conti-theta:balance} for $h_{\theta}(G)$ and Eq.~\eqref{conti2-prob:balance} can be readily obtained by setting $\theta=1$. For convenience, we denote the objective function of Eq.~\eqref{conti-theta:balance} by $T(\vec x)$, and let 
	\begin{equation}
		\label{eq:l_theta}
		L_{\theta}(\vec x,\vec s)=\theta\|\vec x\|_{\infty} +\frac{1-\theta}{e}\|\vec x\|_{1,d}-\langle\vec x,\vec s\rangle
	\end{equation}
	be a linear function for $\vec x\in\mathbb{R}^n$ and $\vec s\in\mathbb{R}^n$. It can be easily found that, when $\theta=1$, $L_1(\vec x,\vec s^k)$
	recovers the objective function of \eqref{eq:twostep_x2_2} and hereafter we always neglect the subscript $\theta$ in this situation and use $L(\vec x,\vec s^k)$ for simplicity.

	\begin{proof} ({\bf Proof of Eq.~\eqref{conti-theta:balance}})	 
		Supposing $(V_1,V_2)\in\tc(V)$ being the optimal solution to $h_{\theta}(G)$, the ternary vector $\vec 1_{V_1,V_2}$ satisfies $T(\vec 1_{V_1,V_2})=h_{\theta}(G)$, which derives $h_{\theta}(G) \geq\min\limits_{\text{nonconstant~}\vec x\in\mathbb{R}^n} T(\vec x)$. Thus it remains to prove the contrary. 
		
		Assuming $\vec x^*\in\argmin\limits_{\text{nonconstant~}\vec x\in\mathbb{R}^n} T(\vec x)$ and $ r^*=T(\vec x^*)$,
		for any fixed $\vec s\in\partial H_{r^*}(\vec x^*)$ (see Eq.~\eqref{eq:Hr}), $H_{r^*}(\vec x)$ is a convex and first degree homogeneous function, indicating $\langle\vec x^*,\vec s\rangle= H_{r^*}(\vec x^*)$ and $\langle\vec x,\vec s\rangle\leq H_{r^*}(\vec x)$ for $\forall\, \vec x\in\mathbb{R}^n$. Therefore, we have $L_{\theta}(\vec x^*,\vec s)=0$ and
		\begin{equation}
			\label{eq:l_theta_geq0}
			L_{\theta}(\vec x,\vec s)\geq \theta\|\vec x\|_{\infty} +\frac{1-\theta}{e}\|\vec x\|_{1,d}-H_{r^*}(\vec x)\geq 0 = L_{\theta}(\vec x^*,\vec s),\,\,\forall\,\vec x\in\mathbb{R}^n,
		\end{equation}
		thereby implying that 
		\begin{equation}
			\label{eq:min_s}
			\vec x^*\in\argmin\limits_{\vec x\in\mathbb{R}^n} L_{\theta}(\vec x,\vec s)\Leftrightarrow	\vec 0\in \theta\partial\|\vec x^*\|_{\infty}+\frac{1-\theta}{e}\partial \|\vec x^*\|_{1,d}-\vec s.
		\end{equation}
		Considering the ternary vector $\vec y=\vec 1_{V_0^+(\vec x^*),V_0^-(\vec x^*)}$,
		it is readily verified through Eqs.~\eqref{eq:sub_ternary1} and \eqref{eq:sub_ternary2} that $\vec y$ also satisfies Eq.~\eqref{eq:min_s}. Therefore, we have $r^*=T(\vec y)\geq h_{\theta}(G)$ which ends the proof. 
	\end{proof}

	It should be pointed out that Eq.~\eqref{conti-theta:balance} can also be directly proved by the set-pair Lov{\'a}sz extension \cite{chang2021lovasz}. 
	Let $\Omega_1=\{\vec x\in\mathbb{R}^n:\max_i x_i+\min_i x_i=0\text{ and }\|\vec x\|_1=1\}$ and $\Omega_2=\{\vec x\in\mathbb{R}^n:\min_i |x_i|=0\text{ and }\|\vec x\|_1=1\}$, both of which are compact. Accordingly, we are able to reduce the non-compact constraint ``nonconstant $\vec x\in\mathbb{R}^n$" to $\vec x\in\Omega_1\cup\Omega_2$, but preserve all feasible solutions originally contained in $\tc(V)$ defined in Eq.~\eqref{ternary-cut},
	by fully exploiting the fact that $\|\vec x\|_{\infty}$, $\|\vec x\|_{1,d}$, $I^+(\vec x)$ and $N(\vec x)$ in Eq.~\eqref{conti-theta:balance} are all first degree homogeneous. After that, we may solve Eq.~\eqref{conti-theta:balance} via the following two-step Dinkelbach iterative scheme \cite{dinkelbach1967nonlinear}
	\begin{subequations}
		\label{iter0}
		\begin{numcases}
			\vec x^{k+1}={\argmin\limits_{\vec x\in\Omega_1\cup\Omega_2} \{\theta\|\vec x\|_{\infty}+\frac{1-\theta}{e}\|\vec x\|_{1,d}-H_{r^k}(\vec x)\}},\label{iter0-1}\\
			r^{k+1}=T(\vec x^{k+1}). \label{iter0-2}
		\end{numcases}
	\end{subequations}

	%

	\begin{theorem}[global convergence]
		\label{thm:conver_1}
		The sequence $\{r^k\}$ generated by the two-step iterative scheme \eqref{iter0} from any initial point $\vec x^0\in \mathbb{R}^n\setminus \{\vec0\}$ decreases monotonically to $h_{\theta}(G)$.
	\end{theorem}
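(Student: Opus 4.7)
The plan is to follow the standard Dinkelbach monotonicity argument, adapted to the compact surrogate set $\Omega_1 \cup \Omega_2$. Introduce the auxiliary one-homogeneous function
\[
F_r(\vec x) \;=\; \theta e\|\vec x\|_{\infty} + (1-\theta)\|\vec x\|_{1,d} - I^+(\vec x) - rN(\vec x),
\]
so that the inner minimization in \eqref{iter0-1} is, up to the positive factor $1/e$, the same as minimizing $F_{r^k}$ over $\Omega_1\cup\Omega_2$. Two elementary observations drive the proof. First, $F_r(\vec x)=N(\vec x)\,(T(\vec x)-r)$ whenever $\vec x$ is nonconstant (so that $N(\vec x)>0$); and second, every element of $\Omega_1\cup\Omega_2$ is automatically nonconstant, because a constant vector with $\|\vec x\|_1=1$ violates both $\max_i x_i+\min_i x_i=0$ and $\min_i |x_i|=0$. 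Consequently $N(\vec x^{k+1})>0$ at every iteration and $r^{k+1}$ is well defined (assuming the initial $\vec x^0$ is nonconstant so that $r^0=T(\vec x^0)$ makes sense).

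The first step is monotonicity. Since $r^k=T(\vec x^k)$ and $\vec x^k\in\Omega_1\cup\Omega_2$, one has $F_{r^k}(\vec x^k)=0$. By the optimality of $\vec x^{k+1}$, it follows that $F_{r^k}(\vec x^{k+1})\le 0$, which rewrites as $N(\vec x^{k+1})\bigl(r^{k+1}-r^k\bigr)\le 0$, and since $N(\vec x^{k+1})>0$, gives $r^{k+1}\le r^k$. The second step is to bound $\{r^k\}$ from below: the equivalence \eqref{conti-theta:balance} yields $r^k=T(\vec x^k)\ge h_\theta(G)$ for every $k$, so $\{r^k\}$ is monotonically nonincreasing and bounded below, hence converges to some $r^\ast\ge h_\theta(G)$.

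The remaining step is to upgrade this to equality. Suppose towards contradiction that $r^\ast>h_\theta(G)$. Pick an optimal pair $(V_1^\ast,V_2^\ast)\in\tc(V)$ for $h_\theta(G)$ and consider the ternary indicator $\vec 1_{V_1^\ast,V_2^\ast}$; rescaling by $\|\vec 1_{V_1^\ast,V_2^\ast}\|_1$ places it in $\Omega_1\cup\Omega_2$, because either $V_1^\ast\cup V_2^\ast\subsetneq V$ (producing a zero entry, so the scaled vector lies in $\Omega_2$) or $V_1^\ast\cup V_2^\ast=V$ (so $\max+\min=0$ and it lies in $\Omega_1$). Call this normalized vector $\vec z$; by zero-homogeneity of $T$, $T(\vec z)=h_\theta(G)$. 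Then for every $k$,
\[
F_{r^k}(\vec z)\;=\;N(\vec z)\bigl(h_\theta(G)-r^k\bigr)\;\le\;N(\vec z)\bigl(h_\theta(G)-r^\ast\bigr)\;<\;0.
\]
The minimality of $\vec x^{k+1}$ gives $F_{r^k}(\vec x^{k+1})\le F_{r^k}(\vec z)\le N(\vec z)(h_\theta(G)-r^\ast)$, a strictly negative constant. On the other hand, $F_{r^k}(\vec x^{k+1})=N(\vec x^{k+1})(r^{k+1}-r^k)$, and since $N$ is continuous on the compact set $\Omega_1\cup\Omega_2$, there is $C>0$ with $N(\vec x^{k+1})\le C$, so the left-hand side tends to $0$ as $r^{k+1}-r^k\to 0$. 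This contradicts the uniform negative upper bound, forcing $r^\ast=h_\theta(G)$.

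The main obstacle I expect is the construction of the comparison vector $\vec z\in\Omega_1\cup\Omega_2$ realizing $T(\vec z)=h_\theta(G)$: this is not automatic because $\Omega_1\cup\Omega_2$ is a strictly smaller normalization slice than all nonconstant vectors, and one must use the combinatorial structure of an optimal ternary cut together with the homogeneity of $T$ to justify that no optimality is lost. The rest is a routine Dinkelbach computation together with the compactness/continuity facts already established in the paper.
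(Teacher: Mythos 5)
Your proof is correct, and its first half (monotone descent via $F_{r^k}(\vec x^k)=0\ge F_{r^k}(\vec x^{k+1})$, plus the lower bound $r^k\ge h_\theta(G)$ from the equivalence \eqref{conti-theta:balance}) coincides with the paper's argument. Where you diverge is the final step establishing $r^*=h_\theta(G)$. The paper introduces the parametric value function $f(r)=\min_{\vec x\in\Omega_1\cup\Omega_2}\{\theta\|\vec x\|_{\infty}+\frac{1-\theta}{e}\|\vec x\|_{1,d}-H_r(\vec x)\}$, invokes its continuity in $r$ (justified by compactness of $\Omega_1\cup\Omega_2$), passes to the limit $f(r^*)=0$, and concludes $T(\vec x)\ge r^*$ for all feasible $\vec x$. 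You instead fix a single comparison vector $\vec z\in\Omega_1\cup\Omega_2$ realizing $T(\vec z)=h_\theta(G)$ and derive a contradiction between the uniformly negative upper bound $F_{r^k}(\vec x^{k+1})\le N(\vec z)(h_\theta(G)-r^*)<0$ and the fact that $F_{r^k}(\vec x^{k+1})=N(\vec x^{k+1})(r^{k+1}-r^k)\to 0$. Your route is slightly more elementary: it needs only boundedness of $N$ on the compact slice rather than continuity of the full value function $r\mapsto f(r)$, at the cost of explicitly constructing the feasible optimizer $\vec z$ (which you handle correctly, and which the paper anyway establishes in its proof of \eqref{conti-theta:balance} via $T(\vec 1_{V_1,V_2})=h_\theta(G)$ together with the observation that the normalized ternary indicator lands in $\Omega_1$ or $\Omega_2$). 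Your parenthetical caveat about nonconstant $\vec x^0$ is apt --- the paper's statement admits constant initial points for which $r^0=T(\vec x^0)$ is undefined --- but this affects both proofs equally and only at $k=0$.
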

	
	\begin{proof}
		Let $r_{\min}=\min_{\vec x\in\Omega_1\cup\Omega_2} T(\vec x)$, indicating $r_{\min}=h_{\theta}(G)$.
		The definition of $\vec x^{k+1}$ in Eq.~\eqref{iter0-1} implies
		\begin{equation*}
			\begin{aligned}
				0&=\theta\|\vec x^k\|_{\infty}+\frac{1-\theta}{e}\|\vec x^k\|_{1,d}-H_{r^k}(\vec x^k)\\
				&\ge \theta\|\vec x^{k+1}\|_{\infty}+\frac{1-\theta}{e}\|\vec x^{k+1}\|_{1,d}-H_{r^k}(\vec x^{k+1}),\\
			\end{aligned}
		\end{equation*}
		which means
		\begin{equation*}
			r^k\ge r^{k+1}\ge r_{\min}, \,\,\, \forall\, k\in \mathbb{N}^+.
		\end{equation*}
		Thus we have
		\[
		\exists\, r^* \in [r_{\min},r^0]\,\, \text { s.t. }\, \lim\limits_{k\to+\infty}r^k=r^*,
		\]
		and it suffices to show $r_{\min}\ge r^*$. To this end, we denote
		\begin{equation*}
			f(r)=\min\limits_{\vec x\in\Omega_1\cup\Omega_2}(\theta\|\vec x\|_{\infty}+\frac{1-\theta}{e}\|\vec x\|_{1,d}-H_{r}(\vec x)),
		\end{equation*}
		which must be continuous on $\mathbb{R}$ by the compactness of $\Omega_1\cup\Omega_2$. Note that
		\begin{align*}
			f(r^k)&=\theta\|\vec x^{k+1}\|_{\infty}+\frac{1-\theta}{e}\|\vec x^{k+1}\|_{1,d}-H_{r^k}(\vec x^{k+1})\\
			&=\frac{N(\vec x^{k+1})}{e}\cdot(r^{k+1}-r^{k})\to 0 \quad \text{as} \quad k\to +\infty,
		\end{align*}
		then we have
		\begin{equation*}
			f(r^*)=\lim\limits_{k\to+\infty}f(r^k)=0,
		\end{equation*}
		and thus
		\begin{equation*}
			\theta\|\vec x\|_{\infty}+\frac{1-\theta}{e}\|\vec x\|_{1,d}-H_{r^*}(\vec x)\ge 0, \,\,\forall\,\vec x\in\Omega_1\cup\Omega_2,
		\end{equation*}
		which directly yields: $\forall\, \vec x\in \Omega_1\cup\Omega_2$, $T(\vec x) \geq r^*$. Namely,  $r_{\min}\ge r^*$. We complete the proof. 
	\end{proof}

	It can be easily verified that setting $\theta=1$ in Eq.~\eqref{iter0} gives 
	\begin{subequations}
		\label{iter00}
		\begin{numcases}
			\vec x^{k+1}={\argmin\limits_{\vec x\in\Omega_1\cup\Omega_2} \{\|\vec x\|_{\infty}-H_{r^k}(\vec x)\}},\label{iter00-1}\\
			r^{k+1}=B(\vec x^{k+1}), \label{iter00-2}
		\end{numcases}
	\end{subequations}
	and thus Theorem~\ref{thm:conver_1} also implies the above iteration \eqref{iter00} solves Eq.~\eqref{conti2-prob:balance} for $h(G)$.

	\section{Simple inverse power method}
	\label{sec:algorithm}
	

	The global convergence established in Theorem~\ref{thm:conver_1} means that the NP-hard problem $h_{\theta}(G)$ is equivalent to the two-step Dinkelbach iterative scheme \eqref{iter0}. However, its subproblem \eqref{iter0-1} is still non-convex and can not be solved in polynomial time.
	In this work, we apply a subgradient relaxation
	\begin{equation}\label{eq:rex}
		H_r (\vec x)\geq  H_r(\vec y) +\langle\vec x-\vec y, \vec s\rangle = \langle\vec x, \vec s\rangle, \, \forall\, \vec s\in\partial H_r (\vec y), \, \forall\, \vec x, \vec y \in \mathbb{R}^n, \, r\ge 0, 
	\end{equation}
	into it, and obtain $\mathbf{SIP_{\theta}}$ --- a three-step inverse power method:
	\begin{subequations}
		\label{iter2-cheeger_2}
		\begin{numcases}{}
			\vec x^{k+1}=\mathop{\argmin}\limits_{\|\vec x\|_1\leq 1} \{L_{\theta}(\vec x,\vec s^k)\},
			\label{eq2:twostep_x2_2}
			\\
			r^{k+1}=T(\vec x^{k+1}),
			\label{eq2:twostep_r2_2}
			\\
			\vec s^{k+1}\in\partial H_{r^{k+1}}(\vec x^{k+1}),
			\label{eq2:twostep_s2_2}
		\end{numcases}
	\end{subequations}
	and the condition for exiting the iteration is that the objective function $T(\vec x)$ is unchanged (see Fig.~\ref{flowchart}). In parallel, $\mathbf{SIP}$ in Eq.~\eqref{iter1-cheeger_2} is relaxed from Eq.~\eqref{iter00} and can be also reduced from $\mathbf{SIP_{\theta}}$ in \eqref{iter2-cheeger_2} with $\theta=1$. Compared with the Dinkelbach iteration \eqref{iter0}, $\mathbf{SIP}_{\theta}$ holds advantages at least in two aspects after setting aside the relaxation temporarily.   One is that the subproblem \eqref{eq2:twostep_x2_2} can be solved analytically in a simple manner (see Section~\ref{sec:exact_prob}); the other is that the iterative values of the objective function still remain monotonically decreasing thanks to
	\begin{equation}
		\label{eq:decrease}
		\begin{aligned}
			r^{k+1}-r^k&=\frac{\theta e\|\vec x^{k+1}\|_{\infty}+(1-\theta)\|\vec x^{k+1}\|_{1,d}-eH_{r^k}(\vec x^{k+1})}{N(\vec x^{k+1})}\\
			&\leq\frac{eL_{\theta}(\vec x^{k+1},\vec s^k)}{N(\vec x^{k+1})}\leq\frac{eL_{\theta}(\vec x^k,\vec s^k)}{N(\vec x^{k+1})}=0, \,\,\forall\,\vec s^k\in\partial H_{r^k}(\vec x^k).
		\end{aligned}
	\end{equation}
	We make the best use of both features to establish an elaborate subgradient selection in Section~\ref{sec:subgradient}, and ensure that $\mathbf{SIP}_{\theta}$ and $\mathbf{SIP}$ converge to local optima in finite iterations (see Section~\ref{sec:local}), though the global convergence is degraded by the relaxation \eqref{eq:rex}.

	\subsection{\textbf{Inner subproblem solution}}
	\label{sec:exact_prob}
	
	
	Let 
	\begin{align}
		\vec l^k &= \frac{\theta-1}{e}\vec d+|\vec s^k|, \label{eq:vec_l} \\
		\Omega_{\vec s^k} &= \left\{i\in V: l_i^k\geq 0\right\}, \label{eq:gamma_region}
	\end{align}
	where the absolute value of a vector is taken in an element-wise manner. Then 
	the subproblem~\eqref{eq2:twostep_x2_2} can be rewritten into
	\begin{equation}
		\label{eq:theta-cheeger0}
		\vec x^{k+1}=\mathop{\argmin}\limits_{\|\vec x\|_1\leq 1} \left\{\theta\|\vec x\|_{\infty}-\langle|\vec x|, \vec l^k\rangle\right\},
	\end{equation}
	and it can be readily verified that $x_i^{k+1}=0$ for $i\in V\backslash\Omega_{\vec s^k}$.
	That is, we only need to find solutions of Eq.~\eqref{eq:theta-cheeger0} in $\Omega_{\vec s^k}$ below, 
	which can be achieved by the method proposed in \cite{SZZ2018}. Before that, we need the following convention on the sign function and its set-valued extension:
	\begin{equation}
		\sign(t) = \left\{
		\begin{aligned}
			&1,&t\geq 0,\\
			&-1,&t<0,
		\end{aligned}
		\right.,\quad \sgn(t) = \left\{
		\begin{aligned}
			&\{1\},&t>0,\\
			&\{-1\},&t<0, \\
			&[-1, 1],&t=0,
		\end{aligned}
		\right.
	\end{equation}
	and for $\vec x=(x_1,x_2,\ldots,x_n)\in\mathbb{R}^n$, 
	we are able to define corresponding vectorized versions
	in an element-wise manner: 
	\begin{align}
		\sign(\vec x)&=(s_1, s_2,\ldots,s_n),\,\,\,s_i = \sign(x_i),\,i = 1,2,\ldots,n,\\
		\sgn(\vec x)&=\{(s_1, s_2,\ldots,s_n) \big| s_i\in \sgn(x_i),\,i = 1,2,\ldots,n\}.
	\end{align}

	\begin{lemma}
		\label{prop:rs}
		The simple iterative scheme~\eqref{iter2-cheeger_2} with $\theta\in(0,1]$ satisfies $\| \vec l^k \|_1\geq \theta$. In particular, 
		(1) $\| \vec l^k \|_1 =  \theta \Leftrightarrow 
		\frac{|\vec x^k|}{\|\vec x^k\|_{\infty}}\in \sgn(\vec l^k)$ and $\forall\,i\in V, x_i^ks_i^k\geq 0$; (2) $\| \vec l^k \|_1 >  \theta \Leftrightarrow L_\theta(\vec x^{k+1}, \vec s^k) = \theta\|\vec x^{k+1}\|_{\infty}-\langle\vec l^k, |\vec x^{k+1}|\rangle < 0$. In addtion, if $\| \vec l^k \|_1 >  \theta$, then $r^{k} > r^{k+1}$. 
	\end{lemma}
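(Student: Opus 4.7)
The plan is to deduce all three assertions from a single chain of sharp inequalities anchored at the current iterate $\vec x^k$. The chain is initiated by combining Euler's identity for the convex positively $1$-homogeneous function $H_{r^k}$, which gives $\langle\vec x^k,\vec s^k\rangle=H_{r^k}(\vec x^k)$ for any $\vec s^k\in\partial H_{r^k}(\vec x^k)$, with the defining identity $r^k=T(\vec x^k)$. Substituting $r^k=T(\vec x^k)$ into $eH_{r^k}(\vec x^k)=I^+(\vec x^k)+r^kN(\vec x^k)$ and using Eq.~\eqref{conti-theta:balance} produces the master identity
\[
\langle\vec x^k,\vec s^k\rangle=\theta\|\vec x^k\|_\infty+\frac{1-\theta}{e}\|\vec x^k\|_{1,d},
\]
which ties the subgradient directly to the structure of the objective.

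From here, the sign bound $\langle\vec x^k,\vec s^k\rangle\le\langle|\vec x^k|,|\vec s^k|\rangle$ together with the definition $\vec l^k=\frac{\theta-1}{e}\vec d+|\vec s^k|$ rearranges to $\theta\|\vec x^k\|_\infty\le\langle|\vec x^k|,\vec l^k\rangle$, and a componentwise application of $|x_i^k|\le\|\vec x^k\|_\infty$ produces the sandwich $\theta\|\vec x^k\|_\infty\le\langle|\vec x^k|,\vec l^k\rangle\le\|\vec x^k\|_\infty\|\vec l^k\|_1$. Dividing by $\|\vec x^k\|_\infty>0$ (from nonconstancy) establishes $\|\vec l^k\|_1\ge\theta$. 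Claim~(1) is then obtained by tracing equality cases: the first inequality is tight iff $x_i^ks_i^k\ge 0$ for every $i$, while the second is tight iff $\vec l^k\ge\vec 0$ componentwise and $|x_i^k|=\|\vec x^k\|_\infty$ whenever $l_i^k>0$; together these two conditions compact into the stated $|\vec x^k|/\|\vec x^k\|_\infty\in\sgn(\vec l^k)$ combined with $x_i^ks_i^k\ge 0$.

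For Claim~(2), the same sign-matching argument applied at the minimizer of Eq.~\eqref{eq2:twostep_x2_2} gives $L_\theta(\vec x^{k+1},\vec s^k)=\theta\|\vec x^{k+1}\|_\infty-\langle\vec l^k,|\vec x^{k+1}|\rangle$. Since the rescaled iterate $\vec x^k/\|\vec x^k\|_1$ is feasible and attains objective value $0$ by positive $1$-homogeneity of $L_\theta(\cdot,\vec s^k)$ together with the master identity, the minimum is $\le 0$; tracing the chain at a minimizer, strict negativity $L_\theta(\vec x^{k+1},\vec s^k)<0$ corresponds to strict inequality $\|\vec l^k\|_1>\theta$, with a concrete witness furnished by taking $\vec x$ supported on $\{i:l_i^k>0\}$ with equal magnitudes. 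The final assertion $r^k>r^{k+1}$ is then immediate from the descent identity~\eqref{eq:decrease}, because $L_\theta(\vec x^{k+1},\vec s^k)<0$ and $N(\vec x^{k+1})>0$. The main technical delicacy is the precise tracking of equality conditions in the chain and their synthesis into the $\sgn(\vec l^k)$ characterization, with a secondary care needed to verify the nonconstancy of $\vec x^{k+1}$ and the $1$-homogeneity rescaling of $\vec x^k$ that underpins the feasibility reduction.
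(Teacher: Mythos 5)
Your overall route differs from the paper's: the paper disposes of this lemma in one line by citing Lemma~3.1 of \cite{SZZ2018}, whereas you give a self-contained derivation from the Euler identity $\langle\vec x^k,\vec s^k\rangle=H_{r^k}(\vec x^k)$ and the definition $r^k=T(\vec x^k)$. That is worthwhile, and the master identity, the sandwich $\theta\|\vec x^k\|_\infty\le\langle|\vec x^k|,\vec l^k\rangle\le\|\vec x^k\|_\infty\|\vec l^k\|_1$, and the equality-case bookkeeping for claim~(1) are all correct (including the observation that $|\vec x^k|/\|\vec x^k\|_\infty\in\sgn(\vec l^k)$ silently forces $\vec l^k\ge\vec 0$, since a nonnegative quantity cannot lie in $\{-1\}$).

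There is, however, a gap in the forward implication of claim~(2). Your witness --- $\vec x$ supported on $P=\{i:l_i^k>0\}$ with equal magnitudes and signs matched to $\vec s^k$ --- attains the value $\bigl(\theta-\sum_{i\in P}l_i^k\bigr)/|P|$, which is negative precisely when $\sum_{i}\max\{l_i^k,0\}>\theta$. But for $\theta\in(0,1)$ the vector $\vec l^k=\frac{\theta-1}{e}\vec d+|\vec s^k|$ can have negative components, so $\|\vec l^k\|_1=\sum_i|l_i^k|>\theta$ does \emph{not} imply $\sum_i\max\{l_i^k,0\}>\theta$: the excess may come entirely from negative entries. In that configuration (which your own equality analysis does not exclude --- it only requires $x_i^k=0$ on the negative-$l$ coordinates) the minimum of $\theta\|\vec x\|_\infty-\langle|\vec x|,\vec l^k\rangle$ over $\|\vec x\|_1\le 1$ equals $0$, since any mass placed where $l_i^k<0$ only increases the objective, and your witness certifies nothing. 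Note that your chain actually proves the stronger bound $\theta\le\sum_i\max\{l_i^k,0\}$ (drop the negative terms before passing to $\|\vec x^k\|_\infty$), which is the quantity the argument really runs on; this is consistent with the paper's reduction in Section~3.1, where components outside $\Omega_{\vec s^k}=\{i:l_i^k\ge 0\}$ are set to zero before invoking \cite{SZZ2018}. To close the gap you must either show that $\sum_i\max\{l_i^k,0\}=\theta$ together with some $l_j^k<0$ cannot occur along the iteration, or restate and prove claim~(2) with $\sum_i\max\{l_i^k,0\}$ (equivalently, the $\ell^1$ norm of $\vec l^k$ restricted to $\Omega_{\vec s^k}$) in place of $\|\vec l^k\|_1$. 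For $\theta=1$ the two quantities coincide and your argument is complete; the final deduction $r^k>r^{k+1}$ from Eq.~\eqref{eq:decrease} and $N(\vec x^{k+1})>0$ is fine once $L_\theta(\vec x^{k+1},\vec s^k)<0$ is secured.
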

	\begin{proof}
		This is a direct consequence of Lemma 3.1 in \cite{SZZ2018}. 
	\end{proof}
	
	
	\begin{lemma}\label{Thm:exact_solution}
		The solution of Eq.~\eqref{eq:theta-cheeger0} with $\theta\in(0,1]$ can be obtained in the following two steps. 
		\begin{itemize}
			\item \textbf{Step 1} Sort all the components of $|\vec l^k|$ in a descending order. Without loss of generality,  we assume  
			$|l_1^k|\geq |l_2^k| \geq\dots\geq |l_n^k| \geq |l_{n+1}^k|=0$, and calculate $m_0=\min\{m: A_m^k>\theta\}$
			and $m_1=\max\{m: A_{m-1}^k<\theta\}$ with $A_m^k=\sum_{j=1}^m(|l_j^k|-|l_{m+1}^k|)$ for $m=1, 2, \dots, n$. 
			Here we have introduced an auxiliary element $l_{n+1}^k=0$ for convenience. 
			\item \textbf{Step 2} The solution construction is divided into two cases:
			\begin{itemize}
				\item If $\| \vec l^k \|_1 >\theta$, we have $x_i^{k+1}= \sign(s_i^k) z_i / \|\vec z\|_1$ for $i=1, 2, \dots, n$,  
				where the vector $\vec z$ reads
				\begin{equation}
					\label{eq:exact_z}
					z_i=\left\{
					\begin{aligned}
						&1, &1\leq i\leq m_1,\\
						&0, &m_0\leq i\leq n,\\
						&[0,1], &m_1<i<m_0;
					\end{aligned}
					\right.
				\end{equation}
				\item If $\| \vec l^k \|_1 =\theta$, the solution satisfies 
				$\frac{|\vec x^{k+1}|}{\|\vec x^{k+1}\|_{\infty}}\in \sgn(\vec l^k)$,  $\|\vec x^{k+1}\|_1=1$,
				and $\forall\,i\in V, x_i^{k+1}s_i^{k}\geq 0$.
			\end{itemize}
		\end{itemize}
		Moreover, the minimizer $\vec x^{k+1}$ satisfies 
		\begin{equation}\label{eq:m}
			\frac{\langle |\vec x^{k+1}|,|\vec l^k|\rangle}{\|\vec x^{k+1}\|_{\infty}} \ge \sum_{i=1}^{m} |l_i^k|,
		\end{equation}
		where $m =m_0$ for $\| \vec l^k \|_1>\theta$, and $m=n$ for $\| \vec l^k \|_1=\theta$. 
	\end{lemma}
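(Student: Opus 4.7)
The plan is to solve the inner subproblem \eqref{eq:theta-cheeger0} by reducing it to a one-dimensional sort-and-greedy optimization, following the line of argument used for Lemma~3.1 in \cite{SZZ2018}. Since both the objective $\theta\|\vec x\|_{\infty}-\langle|\vec x|,\vec l^k\rangle$ and the constraint $\|\vec x\|_1\le 1$ depend on $\vec x$ only through $|\vec x|$, I would fix the sign pattern to $\sign(s_i^k)$ (compatible with the subgradient bookkeeping of Lemma~\ref{prop:rs}) and work from the start with nonnegative variables $y_i=|x_i|$. The preliminary reduction $x_i^{k+1}=0$ on $V\setminus\Omega_{\vec s^k}$ then follows from the observation that any $i$ with $l_i^k<0$ contributes $l_i^k y_i\le 0$, so driving $y_i$ to zero strictly improves the objective.

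After this reduction I would introduce the auxiliary variable $t=\max_i y_i$ and rewrite the subproblem as the linear program
\[
\min\{\theta t-\langle\vec y,\vec l^k\rangle:\,0\le y_i\le t,\,\textstyle\sum_i y_i\le 1\}.
\]
Sorting $|l_i^k|$ in descending order, for fixed $t$ the inner problem in $\vec y$ is solved greedily by filling $y_i=t$ from the largest $l_i^k$ downward until the $\ell^1$ budget is exhausted. The residual problem in $t$ is piecewise-linear with breakpoints at $t=1/m$, and a short computation shows the slope on the $m$-th piece equals $\theta-A_m^k$. Since $\{A_m^k\}$ is non-decreasing in $m$, the slopes change sign exactly once, making the piecewise-linear function unimodal and pinning its minimizing set to the flat segment around the sign change.

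The dichotomy in Lemma~\ref{prop:rs} controls the two cases cleanly. In the strict regime $\|\vec l^k\|_1>\theta$ the minimum is negative, the sign transition of the slope lies between $m_1$ and $m_0$, and the optimal $\vec y$ is the convex hull of the extreme vectors $(1/m,\ldots,1/m,0,\ldots,0)$ for $m$ in that range; reading this off gives the description $z_i=1$ for $i\le m_1$, $z_i=0$ for $i\ge m_0$, and $z_i\in[0,1]$ in between, with $|x_i^{k+1}|=z_i/\|\vec z\|_1$. In the boundary regime $\|\vec l^k\|_1=\theta$ the minimum is zero, and the characterization $|\vec x^{k+1}|/\|\vec x^{k+1}\|_{\infty}\in\sgn(\vec l^k)$ with $\|\vec x^{k+1}\|_1=1$ is exactly the first clause of Lemma~\ref{prop:rs}. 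The lower bound \eqref{eq:m} is then obtained by direct substitution of the closed form: the ratio $\langle|\vec x^{k+1}|,|\vec l^k|\rangle/\|\vec x^{k+1}\|_{\infty}$ collapses to a sum of the top entries of $|\vec l^k|$, which the monotonicity of $\{A_m^k\}$ bounds from below by the partial sum at the claimed $m$.

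The main obstacle will be the careful bookkeeping in the degenerate case $A_{m_1}^k=\cdots=A_{m_0-1}^k=\theta$, where the piecewise-linear function admits a genuinely flat segment and the optimal set becomes a multidimensional polytope; the middle-range choice of $z_i$ must be coupled with the partial sum in \eqref{eq:m} so that the bound is actually attained at the intended index $m$. Handling this degeneracy cleanly, together with the set-valued sign extension on the zero coordinates, constitutes the technical core of the proof; once the LP structure is in place the remaining arguments are a bookkeeping consequence of the sort.
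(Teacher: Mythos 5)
The paper does not actually prove this lemma: its entire ``proof'' is a deferral to Theorem~3.1, Lemma~3.2 and Corollary~3.1 of \cite{SZZ2018}. Your proposal therefore does strictly more work than the paper, and your route --- restrict to $|\vec x|$, introduce $t=\|\vec x\|_\infty$, solve the inner fractional knapsack greedily for fixed $t$, and minimize the resulting convex piecewise-linear function of $t$ whose slope on the piece $t\in(1/(m+1),1/m)$ is $\theta-A_m^k$ --- is a correct and essentially complete reconstruction of the construction in Steps~1--2, including the identification of the minimizing set with the flat segment $[1/m_0,1/m_1]$ and hence with Eq.~\eqref{eq:exact_z}. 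Two small points to tighten: the contribution of an index with $l_i^k<0$ to the objective is $-l_i^ky_i\ge 0$ (your sign convention reads backwards as written), and fixing the sign pattern $x_i=\sign(s_i^k)|x_i|$ deserves the one-line remark that $\langle\vec x,\vec s^k\rangle\le\langle|\vec x|,|\vec s^k|\rangle$ with equality exactly at that pattern, which is how Eq.~\eqref{eq:theta-cheeger0} arises from Eq.~\eqref{eq2:twostep_x2_2} in the first place.

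The genuine gap is exactly where you place it, in Eq.~\eqref{eq:m}, but it is worse than bookkeeping. Direct substitution gives $\langle|\vec x^{k+1}|,|\vec l^k|\rangle/\|\vec x^{k+1}\|_\infty=\sum_i z_i|l_i^k|$, and in the degenerate case $m_1<m_0$ this equals $\sum_{i=1}^{m_1}|l_i^k|+\sum_{m_1<i<m_0}z_i|l_i^k|$, which can be strictly smaller than $\sum_{i=1}^{m_0}|l_i^k|$ because $z_{m_0}=0$ and the middle $z_i$ may vanish. For instance, $\theta=1$ and $|\vec l^k|=(1.5,0.5,0.5)$ give $m_1=1$, $m_0=3$, and the admissible minimizer $\vec z=(1,1,0)$ yields a ratio of $2$ against $\sum_{i=1}^{m_0}|l_i^k|=2.5$. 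So the inequality with $m=m_0$ holds (with equality) only for the extreme minimizer $t=1/m_0$, not for every point of the solution set described in Step~2; what is uniformly true, and what the downstream estimate \eqref{eq:rsr} actually needs, is the weaker bound $\sum_i z_i|l_i^k|\ge\sum_{i=1}^{m_1}|l_i^k|\ge A_{m_1}^k\ge\theta$. Your proof should either restrict \eqref{eq:m} to that particular minimizer or prove and use the $m=m_1$ version; monotonicity of $\{A_m^k\}$ alone will not rescue the statement as written.
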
 
	\begin{proof}
		The solution construction can be readily achieved using Theorem 3.1 and Lemma 3.2 in \cite{SZZ2018} and Eq.~\eqref{eq:m} comes from Corollary 3.1 in \cite{SZZ2018}. Here we skip the details for saving space.
	\end{proof}
	
	Lemma~\ref{Thm:exact_solution} demonstrates that Eq.~\eqref{eq2:twostep_x2_2} can be analytically solved via a simple sort operation, thereby reducing the computational cost of $\mathbf{SIP}_{\theta}$, as detailed in Section~\ref{SIP-IP}. In practice, $\mathbf{SIP}_{\theta}$ prefers to choose $\vec x^{k+1}$ on the boundary of the closed solution pool of Eq.~\eqref{eq:theta-cheeger0} after that the $k$-th iteration is completed with 	$\vec s^k\in\partial H_{r^k}(\vec x^k)$, leading to 
	\begin{equation}
		\label{eq:teranry_vec}
		\frac{\vec x^{k+1}}{\|\vec x^{k+1}\|_{\infty}}\in\{-1,0,1\}^n
	\end{equation}
	from Lemma~\ref{Thm:exact_solution}. Specifically, \textbf{Step 2} in Lemma~\ref{Thm:exact_solution} suggests to select $z_i=X_{0,1}$  for $i\in (m_1, m_0)$ if $\|\vec l^k\|_1>\theta$ (see Eq.~\eqref{eq:exact_z}), where $X_{a,b}\in\{a,b\}$ denotes a random variable with an equal probability of being either $a$ or $b$. As for $\|\vec l^k\|_1=\theta$, Lemma~\ref{Thm:exact_solution} results in 
	\begin{equation}
		\label{eq:exact_x}
		\frac{x_i^{k+1}}{\|\vec x^{k+1}\|_{\infty}}\in\left\{
		\begin{aligned}
			&\{\sign(s_i^k)\}, &\text{if }l_i^k>0,\\
			&[a_i,b_i], &\text{if }l_i^k=0,
		\end{aligned}
		\right.
		\quad
		[a_i,b_i]=\left\{
		\begin{aligned}
			&[-1,1],&\text{if }s_i^k=0,\\
			&[0,1], &\text{if }s_i^k>0,\\
			&[-1,0],&\text{if }s_i^k<0.
		\end{aligned}
		\right.
	\end{equation}
	Accordingly, we choose $x_i^{k+1}=\sign(s_i^k)$ for $i\in \Gamma_1^k$ and it remains to determine $x_i^{k+1}$ for $i \in \Gamma_0^k$ such that the nonconstant constraint is satisfied, when $\Gamma_1^k=\{i\in V:\,l_i^k>0\}$ and $\Gamma_0^k=\{i\in V:\,l_i^k=0\}$. To this end, the selection can be specified in the following two cases.
	\begin{itemize}
		\item\textbf{Case 1} If $|\Gamma_1^k|\geq 1$, first randomly select $j\in \Gamma_1^k$ and $i\in\Gamma_0^k$, and we ensure $x_i^{k+1}\neq x_j^{k+1}$, i.e.
		$$x_i^{k+1}=\left\{
		\begin{aligned}
			&-1, &\text{if }s_i^k=0\text{ and }\sign(s_j^k)=1,\\
			&0, &\text{if }s_i^k>0\text{ and }\sign(s_j^k)=1,\\
			&X_{-1,0}, &\text{if }s_i^k<0\text{ and }\sign(s_j^k)=1,\\
			&1, &\text{if }s_i^k=0\text{ and }\sign(s_j^k)=-1,\\
			&X_{0,1}, &\text{if }s_i^k>0\text{ and }\sign(s_j^k)=-1,\\
			&0, &\text{if }s_i^k<0\text{ and }\sign(s_j^k)=-1.
		\end{aligned}
		\right.
		$$
		Later, select $x_t^{k+1}=X_{a_t,b_t}$ for  $t\in\Gamma_0^k\backslash
		\{i\}$ and then scale $\vec x^{k+1}= \frac{\vec x^{k+1}}{\|\vec x^{k+1}\|_1}$.
		\item\textbf{Case 2} If $|\Gamma_1^k|=0$, first randomly select $i\in\Gamma_0^k$ and choose $x_i^{k+1}=X_{a_i,b_i}$, then randomly select $j\in\Gamma_0^k\backslash
		\{i\}$ and ensure $x_i^{k+1}\neq x_j^{k+1}$, i.e.
		$$x_j^{k+1}=\left\{
		\begin{aligned}
			&-1, &\text{if }s_j^k=0\text{ and }X_{a_i,b_i}=1,\\
			&0, &\text{if }s_j^k>0\text{ and }X_{a_i,b_i}=1,\\
			&X_{-1,0}, &\text{if }s_j^k<0\text{ and }X_{a_i,b_i}=1,\\
			&1, &\text{if }s_j^k=0\text{ and }X_{a_i,b_i}=-1,\\
			&X_{0,1}, &\text{if }s_j^k>0\text{ and }X_{a_i,b_i}=-1,\\
			&0, &\text{if }s_j^k<0\text{ and }X_{a_i,b_i}=-1,\\
			&X_{-1,1}, &\text{if }s_j^k=0\text{ and }X_{a_i,b_i}=0,\\
			&1, &\text{if }s_j^k>0\text{ and }X_{a_i,b_i}=0,\\
			&-1, &\text{if }s_j^k<0\text{ and }X_{a_i,b_i}=0.
		\end{aligned}
		\right.
		$$
		Later, select $x_t^{k+1}=X_{a_t,b_t}$ for $t\in\Gamma_0^k\backslash
		\{i,j\}$ and scale $\vec x^{k+1}= \frac{\vec x^{k+1}}{\|\vec x^{k+1}\|_1}$.
	\end{itemize}
		Therefore, $\mathbf{SIP}_{\theta}$ must produce nonconstant ternary valued solutions contained in $\tc(V)$ during the iterations which are also in consistence with the discrete nature of $h_{\theta}(G)$. In fact,
	when $\mathbf{SIP}_{\theta}$ fails to reduce $T(\vec x)$, it is still required to output a solution within $\tc(V)$ rather than other solutions outside $\tc(V)$,  for instance, the constant ones. Particularly, when $\theta=1$, it is easily verified from Eq.~\eqref{eq:exact_x} that $\mathbf{SIP}$ yields nonconstant binary valued solutions, thereby maintaining alignment with the discrete nature of $h(G)$. 
		
	Finally, we would like to point out that the above analytical solution to the inner subproblem~\eqref{eq2:twostep_x2_2}
	is of great importance in ensuring the local optimality of $\mathbf{SIP}_{\theta}$ (see Theorem~\ref{thm:conver_3theta}).
	In particular, when $\theta=1$, the output of $\mathbf{SIP}$ serves as a discrete local optimum (see Theorem~\ref{thm:discrete}) whereas that of $\mathbf{IP}$ may not. 
	By contrast,  the inner subproblem~\eqref{eq:twostep_x2} in $\mathbf{IP}$ necessitates a convex solver with nonzero precision, making it a challenge to explore its locality, let alone maintain a discrete local optimum (see Example~\ref{ex:ip}).

	\subsection{\textbf{Subgradient selection}}
	\label{sec:subgradient}
	
	
	We demonstrate the subgradient selection for $\mathbf{SIP}$ in Eq.~\eqref{iter1-cheeger_2} at the first place. 
	According to Eqs.~\eqref{eq:rex} and \eqref{eq:m}, the monotonically decreasing property of $r^k$ suggests a direct relationship with arbitrary subgradient $\vec s^k\in\partial H_{r^k}(\vec x^k)$: 
	\begin{equation}\label{eq:rsr}
		\begin{aligned}
			r^k-r^{k+1}&=\frac{e\|\vec x^{k+1}\|_{\infty}}{N(\vec x^{k+1})}\left(\frac{H_{r^k}(\vec x^{k+1})}{\|\vec x^{k+1}\|_{\infty}}-1\right)\\
			&\geq \frac{e\|\vec x^{k+1}\|_{\infty}}{N(\vec x^{k+1})}\left(\frac{\langle\vec x^{k+1},\vec s^k\rangle}{\|\vec x^{k+1}\|_{\infty}}-1\right)\\
			&\geq \frac{e\|\vec x^{k+1}\|_{\infty}}{N(\vec x^{k+1})}\left(\sum_{i\in\iota} |s_i^k|-1\right)\geq 0,
		\end{aligned}
	\end{equation} 
	which involves an index set $\iota\subset\{1,2,\ldots,n\}$ with a size of $m$ specified in Eq.~\eqref{eq:m}. Therefore, $r^{k+1}<r^k$ holds if $\vec s^k$ satisfies that there exists $i\in V$ such that $x_i^k/\|\vec x^k\|_{\infty}\notin\sgn(s_i^k)$, which derives from 
	\begin{equation}
		\label{neq:strict_r}
		\exists\, i\in V,\,\frac{x_i^k}{\|\vec x^k\|_{\infty}}\notin\sgn(s_i^k)\Rightarrow \frac{\vec x^{k}}{\|\vec x^k\|_{\infty}}\notin\sgn(\vec s^k)\Rightarrow\|\vec s^k\|_1>1\Rightarrow r^{k+1}<r^k
	\end{equation}
	as stated in Lemma~\ref{prop:rs} and Eq.~\eqref{eq:rsr}. In this case, it motivates us to always select $\vec s^k\in \partial H_{r^k}(\vec x^k)$ that ensures Eq.~\eqref{neq:strict_r}, provided that such subgradient exists. 
	Moreover, maximizing $\|\vec s^k\|_1$ as much as possible within the convex region $\partial H_{r^k}(\vec x^k)$ may contribute to a substantial decrease at each iteration. Thus, we will select such subgradient $\vec s^k$ on the boundary of $\partial H_{r^k}(\vec x^k)$, the characterization of which depends on the characterization of $\partial N(\vec x)$ and $\partial I^+(\vec x)$ by Eq.~\eqref{eq:Hr}. In this regard, we call such strategy a \emph{boundary-detected subgradient selection}.
	\\

	$\bullet$ Characterization of $\partial N(\vec x)$
	\\

	Let
	\begin{align}
		\median(\vec x)&=\argmin_{c\in\mathbb{R}}\sum_{i\in V}\mu_i|x_i-c|,\label{eq:median}\\
		S^{\pm}(\vec x)&=\{i\in V \big| x_i=\pm \|\vec x\|_{\infty}\}, \label{eq:x_category1}\\
		S^<(\vec x)&=\{i\in V\big | |x_i|< \|\vec x\|_{\infty}\},\label{eq:x_category2}\\
		S^\alpha(\vec x)&=\{i\in V \big |x_i=\alpha\},\quad\alpha\in \median(\vec x).\label{eq:x_category_alpha}
	\end{align}
	Then the $i$-th component,  $(\partial N(\vec x))_i$,  satisfies 
	\begin{equation}
		(\partial N(\vec x))_i = [a_{i}^L, a_{i}^R], 
	\end{equation}
	where 
	\begin{align}
		a_{i}^L&=
		\left\{\begin{array}{ll}
			{\max\{A-B+\mu_i,-\mu_i\},} & {\text { if } i\in S^{\alpha}(\vec x)\text{ and }|	S^\alpha(\vec x)|\geq 2,} \\ 
			{A,} & {\text { if } i\in S^{\alpha}(\vec x)\text{ and }|	S^\alpha(\vec x)|\leq 1,}\\
			{\mu_i\sign(x_i-\alpha),} & {\text { if }i\in V\setminus S^{\alpha}(\vec x),}
		\end{array}
		\right. \label{eq:a_bound_l}\\
		a_{i}^R &= 
		\left\{\begin{array}{ll}
			{\min\{A+B-\mu_i,\mu_i\},} & {\text { if } i\in S^{\alpha}(\vec x)\text{ and }|	S^\alpha(\vec x)|\geq 2,} \\ 
			{A,} & {\text { if } i\in S^{\alpha}(\vec x)\text{ and }|	S^\alpha(\vec x)|\leq 1,}\\
			{\mu_i\sign(x_i-\alpha),} & {\text { if }i\in V\setminus S^{\alpha}(\vec x),}
		\end{array}
		\right. \label{eq:a_bound_r}	 \\
		A &=\sum_{x_i<\alpha}\mu_i-\sum_{x_i>\alpha}\mu_i,\quad
		B = \sum_{x_i=\alpha}\mu_i. \label{eq:v_ab}
	\end{align}
	\\

	$\bullet$ Characterization of $\partial I^+(\vec x)$
	\\
	
	A direct algebraic calculation gives 
	\begin{equation}
		(\partial I^+(\vec x))_i = [p_i-q_i,p_i+q_i],
	\end{equation}
	where 
	\begin{align}
		p_i &= \sum_{j:\{i,j\}\in E} w_{ij}\sign(x_i+x_j)-q_i, \\
		q_i &= \sum_{j\in\nen(i,\vec x)} w_{ij}, \\
		\nen(i,\vec x) &= \{j:\{i,j\}\in E\text{ and }x_i+x_j=0\}.
	\end{align}
	Here $\nen(i,\vec x)$ denotes the set of ``negative equal neighbors" of the $i$-th vertex $i$ on $\vec x$. 	\\

	$\bullet$ Characterization of $\partial H_r(\vec x)$
	\\
	
	According to Eq.~\eqref{eq:Hr}, using the property of convex functions leads to

	%
	\begin{equation}
		\label{eq:partial_h}
		\partial H_r(\vec x)=\frac{1}{e}\left(\partial I^+(\vec x)+r\partial N(\vec x)\right),
	\end{equation} 
	implying that each $\vec s\in\partial H_r(\vec x)$ can be constructed as   
	\begin{equation}
		\label{eq:sub_s}
		\vec s=\frac{1}{e}(\vec u+r\vec v), \quad \vec u\in\partial I^+(\vec x), \quad \vec v\in\partial N(\vec x).
	\end{equation}
	As a result, the $i$-th component $(\partial H_r(\vec x))_i$ also appears in the form of an interval. \\

	In order to search for the subgradient $\vec s$ that ensures Eq.~\eqref{neq:strict_r} (the superscript $k$ in $\vec s^k$, $\vec l^k$, $\vec x^k$ and $r^k$ is neglected for simplicity for the rest of Section~\ref{sec:subgradient}) on the boundary of $\partial H_r(\vec x)$, 
	we introduce a boundary indicator $\vec b=(b_1,\ldots,b_n)\in\mathbb{R}^n$
	\begin{align}
		b_{i}&=\left\{\begin{array}{ll}{p_{i} \mp q_{i}+r a_i,} & {\text { if } i \in S^{ \pm}(\vec x),} \\ {p_{i}+r a_i+\operatorname{sign}\left(p_{i}+r a_i\right) q_{i},} & \text { if } i \in S^{<}(\vec x),
		\end{array}\right.\label{u1}
	\end{align}
	with
	\begin{align}
		a_i&=
		\begin{cases}
			\mu_i\sign(x_i-\alpha),&\text{if $i\in V\setminus S^{\alpha}(\vec x)$},\\
			a^L_i,&\text{if $i\in S^{\alpha}(\vec x)\cap S^+(\vec x)$ and $|S^\alpha(\vec x)|\geq 2$},\\
			a^R_i,&\text{if $i\in S^{\alpha}(\vec x)\cap S^-(\vec x)$ and $|S^\alpha(\vec x)|\geq 2$},\\  \argmax\limits_{t\in\{a^L_i,a^R_i\}}\{|p_i+rt|\} ,&\text{if $i\in S^{\alpha}(\vec x)\cap S^<(\vec x)$ and $|S^\alpha(\vec x)|\geq 2$},\\
			A,&\text{if $i\in S^{\alpha}(\vec x)$ and $|S^\alpha(\vec x)|\leq 1$},
		\end{cases}
		\label{hh}	
	\end{align}
	and each $b_i$ sits on the boundary of $(\partial H_r(\vec x))_i$.  Thus    
	for any $i\in V$, it can be easily verified that 
	\begin{equation}
		\label{eq:desire_v}
		\exists\, i\in V,\,\frac{x_i}{\|\vec x\|_{\infty}}\notin\sgn(s_i) \Rightarrow \frac{x_i}{\|\vec x\|_{\infty}}\notin\sgn(b_i).
	\end{equation}
	Consequently, we denote the vertex $i$ as the ``desired" one hereafter if it satisfies Eq.~\eqref{eq:desire_v}. 
	For any $i\in V$, let
	\begin{equation}
		\chi(i)=\left\{\begin{array}{ll}
			{\mp 1,} & {\text { if } x_i\in S^{\pm}(\vec x),} \\ 
			{\delta_{a_i}(a_i^R)-\delta_{a_i}(a_i^L),} & {\text { if } x_i\in S^{<}(\vec x),}
		\end{array}
		\right.\\
	\end{equation}
	be the preferred sign for $s_i\in(\partial H_r(\vec x))_i$, leading to $b_i\chi(i)>0$  if $i$ is a desired vertex,
	where the indicator function $\delta_{t}(x)$ returns $1$ only if $x=t$ and otherwise vanishes. We would like to use 
	\begin{equation}
		\label{eq:wanted_ub}
		V_{\vec b}=\argmax\limits_{i\in S^+(\vec x)}\{|b_i|:\,b_i<0\}\cup \argmax\limits_{i\in S^-(\vec x)}\{|b_i|:\,b_i>0\}\cup \argmax\limits_{i\in S^<(\vec x)}\{|b_i|:\,b_i\neq 0\}
	\end{equation}
	to collect the desired vertices each of which corresponds to a component of $\vec b$ with largest absolute value in  $S^+(\vec x)$, $S^-(\vec x)$, and $S^<(\vec x)$, respectively.
	Accordingly, the essence of subgradient selection for $\mathbf{SIP}$ is to dig out at least one desired vertex $i$, if there exists any, and then select $s_i=b_i/e$. The process can now be reformulated with more details into the following three steps. 
	
	\begin{itemize}
		\item \textbf{Step 1:} If $V_{\vec b} = \emptyset$ skip this step and go to \textbf{Step 2} directly; Otherwise randomly select $i^*\in V_{\vec b}$.  
		Then,  calculate
		\begin{equation}
			\label{star_u_theta}
			u_{i^*}=p_{i^*}+\sum\limits_{j\in\nen(i^*,\vec x)}w_{i^*j}\chi(i^*),
		\end{equation}
	and for each neighbor $i\in\nen(i^*,\vec x)$, given $\sigma\in\Sigma_{\vec b}(\vec x)$,  obtain
	\begin{equation}\label{u_neighbor}
		u_i=p_i+w_{ii^*}\chi(i^*)+\sum\limits_{j\in\nen(i,\vec x)\backslash\{i^*\}}w_{ij}\chi(\argmax\limits_{t\in\{i,j\}}\{\sigma^{-1}(t)\}),
	\end{equation}
	where $\Sigma_{\vec b}(\vec x)$ denotes a collection of  permutations of $\set{1,2,\ldots,n}$ such that for any $\sigma\in\Sigma_{\vec b}(\vec x)$, it holds $|b_{\sigma(1)}|\leq |b_{\sigma(2)}|\leq \cdots\leq |b_{\sigma(n)}|$.
	\item \textbf{Step 2:} For each remaining vertex $i$, the subgradient component follows 
	\begin{equation}\label{c1:u}
		u_i = p_i+\sum_{j\in\nen(i,\vec x)}w_{ij}\chi(\argmax_{t\in\{i,j\}}\{\sigma^{-1}(t)\}).
	\end{equation}
	\item \textbf{Step 3:} In the case of $|S^\alpha(\vec x)|\leq 1$, calculate $v_i=a_i$ according to Eq.~\eqref{hh} for each $i\in V$; 
	and in the case of $|S^\alpha(\vec x)|\geq 2$, select
	\begin{equation}
		\label{v2_1}
		v_i=
		\begin{cases}
			a_i,&\text{if }i\in \{j^*\} \cup V\backslash S^{\alpha}(\vec x),\\
			\frac{A-a_{j^*}}{B-\mu_{j^*}}\mu_i,&\text{if }i\in S^{\alpha}(\vec x)\backslash \{j^*\},
		\end{cases}
	\end{equation}
	with 
	$$
	j^*=\left\{\begin{array}{ll}
		{i^*,} & {\text { if } i^*\in S^{\alpha}(\vec x),} \\ 
		{\argmax\limits_{t\in S^{\alpha}(\vec x)}\{\sigma^{-1}(t)\},} & {\text { otherwise}.}
	\end{array}
	\right.
	$$
\end{itemize}
We would like to point out that $\sigma\in\Sigma_{\vec b}(\vec x)$ tends to satisfy $s_i\rightarrow b_i/e$ when $|b_i|$ is relatively large, potentially leading to a large value of $\|\vec s\|_1$ and a profound decrease in the iterative objective function values (see Eq.~\eqref{eq:rsr}).



\begin{remark}\label{re:separate}
	We will prove that the above boundary-detected subgradient selection ensures that  $\mathbf{SIP}$ converges locally in both continuous and discrete senses (see Theorems~\ref{thm:conver_3theta} and \ref{thm:discrete}). By contrast, a naive random selection can't. In fact,  our numerical results on G-set show that the numerical solutions with random subgradients are unlikely to be local minimizers with high probability and their solution quality is much worse than those obtained by $\mathbf{SIP}$ equipped with the  boundary-detected subgradient selection in all instances. 
\end{remark}

Next we consider the subgradient selection for $\mathbf{SIP}_{\theta}$ that serves as a perturbation of $\mathbf{SIP}$ as shown in Fig.~\ref{flowchart}. 
For any $i\in V$, it is easily verified that
\begin{align}
	\exists\, i\in \Omega_{\vec s},\,\frac{|x_i|}{\|\vec x\|_{\infty}}\notin\sgn(l_{i}) &\Rightarrow  \frac{|x_i|}{\|\vec x\|_{\infty}}\notin\sgn(b_{i}^\prime) \Rightarrow i\in S^<(\vec x),\label{eq:desire_v1}\\
	\exists\, i\in V,\, x_is_i<0 &\Rightarrow x_ib_i<0 \Rightarrow x_i\neq 0,\label{eq:desire_v2}
\end{align}
where 
\begin{equation}\label{eq:b'}
	\vec b^\prime=\frac{|\vec b|+(\theta-1)\vec d}{e},
\end{equation}
$\vec l$ and $\Omega_{\vec s}$ are defined in Eqs.~\eqref{eq:vec_l} and \eqref{eq:gamma_region}, respectively. 
According to Lemma~\ref{prop:rs}, the desired vertex $i$ turns to satisfy either Eq.~\eqref{eq:desire_v1} or Eq.~\eqref{eq:desire_v2}. Let
\begin{equation}
	\label{eq:wanted_ub1}
	\begin{aligned}
		V_{\vec b}^\prime=&\argmax\limits_{i\in S^+(\vec x)}\{|b_i|:\,b_i<0\}\cup \argmax\limits_{i\in S^-(\vec x)}\{|b_i|:\,b_i>0\}\cup \argmax\limits_{i\in S^<(\vec x)}\{|b_i^\prime|:\,b_i^\prime> 0\}\\
		&\cup\argmax\limits_{i\in S^<(\vec x)}\{|b_i|:\,|b_i^\prime|= 0\text{ and }x_ib_i<0\}.
	\end{aligned}
\end{equation}
Therefore,  the subgradient selection in $\mathbf{SIP}_{\theta}$ follows the same aforementioned three steps as in $\mathbf{SIP}$ except for replacing 
$V_{\vec b}$ with $V_{\vec b}^\prime$. Note that $V_{\vec b}^\prime$ reduces into $V_{\vec b}$ when $\theta=1$.

\subsection{\textbf{Local analysis}}
\label{sec:local}

The proposed  boundary-detected subgradient selection for $\mathbf{SIP}_{\theta}$ ensures 
that the objective function $T(\vec x)$ strictly decreases whenever possible as shown in Theorem~\ref{prop:S123_theta}.
Furthermore, we are able to prove that $\mathbf{SIP}_{\theta}$ converges to a local minimum $\vec x^*$ in the neighborhood 
\begin{equation}
	\label{neighbor}
	U(\vec x^*)=\left\{\vec y\in \mathbb{R}^n\,\, \Big|\,\, \|\frac{\vec y}{\|\vec y\|_{\infty}}-\frac{\vec x^*}{\|\vec x^*\|_{\infty}}\|_\infty<\frac12\right\}
\end{equation}
within finite iterations as demonstrated in  Theorem~\ref{thm:conver_3theta}. 
In addition, we will show that $\vec x^*$ also serves as a local minimum in $C_B$ for $\mathbf{SIP}$ (see Theorem~\ref{thm:discrete}) from a discrete perspective, where
\begin{align}
	C_B &= \{\vec x\in\mathbb{R}^n \big| B(\vec y)\geq B(\vec x), \,\, \forall\,\vec y\in\{R_i\vec x: i\in\{1,\ldots,n\}\}\},  \label{eq:CT}\\
	(R_i\vec x)_j& =
	\begin{cases}
		x_j, & j\ne i,\\
		-x_j, & j=i.
	\end{cases}\label{eq:Tialp}
\end{align}

\begin{theorem}[$\mathbf{SIP}_{\theta}$: strict descent guarantee]
\label{prop:S123_theta}
Suppose $\vec x^k$ and $r^k$ are produced by $\mathbf{SIP}_{\theta}$ at the $k$-th iteration and define the following function 
\begin{equation}
	\label{eq:sip_theta_thm}
	\widetilde{L}_{\theta}(\vec s)=\min_{\|\vec y\|_1\leq 1} L_{\theta}(\vec y, \vec s) 
\end{equation}
on the convex domain $\partial H_{r^k}(\vec x^k)$. 
Then we have
\[
\widetilde{L}_{\theta}(\vec s^k) < 0 \Leftrightarrow  \exists\, \vec s \in \partial H_{r^k}(\vec x^k) \text{~such that~} \widetilde{L}_{\theta}(\vec s) < 0,
\]
where $\vec s^k$ is selected through the strategy proposed in Section~\ref{sec:subgradient}. 
\end{theorem}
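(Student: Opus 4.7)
The implication $(\Rightarrow)$ is immediate by taking $\vec s=\vec s^k$. The plan for $(\Leftarrow)$ is to reduce the condition $\widetilde{L}_\theta(\vec s)<0$ to a combinatorial statement about $\vec x^k$ and $\vec s$, and then to show that this combinatorial statement is exactly what the boundary-detected selection of Section~\ref{sec:subgradient} is designed to detect and realize.

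By Lemma~\ref{prop:rs}, for any $\vec s\in\partial H_{r^k}(\vec x^k)$ with associated $\vec l=\frac{\theta-1}{e}\vec d+|\vec s|$, one has $\widetilde{L}_\theta(\vec s)<0$ iff $\|\vec l\|_1>\theta$, iff at least one of the two equality conditions of the lemma fails at some coordinate $i$: either (a) $\frac{|x_i^k|}{\|\vec x^k\|_\infty}\notin\sgn(l_i)$, or (b) $x_i^k s_i<0$. I would then classify such a failing coordinate $i$ by its membership in $S^+(\vec x^k)$, $S^-(\vec x^k)$ or $S^<(\vec x^k)$: for $i\in S^+$ the only possible violation is (b) with $s_i<0$; for $i\in S^-$, (b) with $s_i>0$; and for $i\in S^<$, either (a), which requires $l_i>0$, or (b) with $l_i=0$ and $x_is_i<0$. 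Using that each marginal $(\partial H_{r^k}(\vec x^k))_i$ is an interval (by Eq.~\eqref{eq:partial_h}), any violation at $i$ can be pushed to the corresponding endpoint $b_i/e$ defined in \eqref{u1}, and the four sets in \eqref{eq:wanted_ub1} precisely record when each type of violation is attainable. Consequently, $V_{\vec b}'\ne\emptyset$ iff some $\vec s\in\partial H_{r^k}(\vec x^k)$ yields $\widetilde{L}_\theta(\vec s)<0$.

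It then remains to show that whenever $V_{\vec b}'\ne\emptyset$ the boundary-detected construction actually produces a feasible $\vec s^k\in\partial H_{r^k}(\vec x^k)$ realizing a violation. Step~1 picks $i^*\in V_{\vec b}'$ and enforces $es_{i^*}^k=b_{i^*}$ via \eqref{star_u_theta}, which by the case decomposition above triggers either (a) or (b) at $i^*$; Steps~2 and~3 then extend $\vec s^k$ to all remaining coordinates. The main obstacle is the feasibility verification, because $\partial I^+(\vec x^k)$ is not a direct product across coordinates: along every edge $j\in\nen(i,\vec x^k)$ the subgradient of $|x_i+x_j|$ couples $u_i$ and $u_j$ through a common parameter in $[-1,1]$. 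One must check that the $\chi$-based sign choices in \eqref{star_u_theta}, \eqref{u_neighbor} and \eqref{c1:u}, ordered by any $\sigma\in\Sigma_{\vec b}(\vec x^k)$, are self-consistent along every NEN edge; and similarly that \eqref{v2_1} lies in $\partial N(\vec x^k)$ per the median-level characterization \eqref{eq:a_bound_l}--\eqref{eq:v_ab}, particularly in the delicate case $|S^\alpha(\vec x^k)|\ge 2$. This reduces to a case analysis on membership of each vertex in $\{S^+,S^-,S^<\}$ and in $S^\alpha$, which I expect to constitute the bulk of the proof.
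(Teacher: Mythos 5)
Your proposal follows essentially the same route as the paper's proof: both directions reduce via Lemma~\ref{prop:rs} to the existence of a coordinate where $\frac{|x_i^k|}{\|\vec x^k\|_\infty}\notin\sgn(l_i)$ or $x_i^k s_i<0$, use the interval structure of each marginal $(\partial H_{r^k}(\vec x^k))_i$ to push any such violation to the boundary value $b_i/e$ (hence $V_{\vec b}'\neq\emptyset$), and then observe that the selection $s^k_{i^*}=b_{i^*}/e$ realizes the violation, so Lemma~\ref{prop:rs} gives $\widetilde{L}_\theta(\vec s^k)<0$. The feasibility verification you flag as the likely bulk of the argument (consistency of the $\chi$-based choices along NEN edges and of \eqref{v2_1} with the median characterization) is in fact not carried out in the paper's proof either, which simply takes the construction of Section~\ref{sec:subgradient} as producing a valid subgradient.
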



\begin{proof}
The necessity is evident and we only need to prove the sufficiency.  Suppose that we can select a subgradient $\vec s\in \partial H_{r^k}(\vec x^k)$ satisfying $\widetilde{L}_{\theta}(\vec s)<0$, then according to Lemma~\ref{prop:rs}, there exists an $t^*\in\{1,2,\ldots,n\}$ satisfying
\begin{equation}
	\left\{
	\begin{aligned}
		&\frac{b_{t^*}}{e}\leq s_{t^*}<0, &\text{if }t^*\in S^+(\vec x^k),\\
		&\frac{b_{t^*}}{e}\geq s_{t^*}>0, &\text{if }t^*\in S^-(\vec x^k),\\
		&	|b_{t^*}^\prime|\geq |l_{t^*}|>0, &\text{if }t^*\in S^<(\vec x^k) \text{ and } |b_{t^*}^\prime|>0,\\
		&\frac{x_{t^*}b_{t^*}}{e}= x_{t^*}s_{t^*}<0, & \text{if }t^*\in S^<(\vec x^k) \text{ and } |b_{t^*}^\prime|=0,
	\end{aligned}
	\right.
\end{equation}
and thus $V_{\vec b}^\prime$ in Eq.~\eqref{eq:wanted_ub1} is nonempty. Consequently, the selected vertex $i^*$ satisfies  $s^{k}_{i^*}=b_{i^*}/e$ with 
\begin{equation}
	\left\{
	\begin{aligned}
		&x_{i^*}s^{k}_{i^*}<0, &\text{if }i^*\in S^{\pm}(\vec x^k),\\
		&	|s^{k}_{i^*}|+\frac{(\theta-1)d_{i^*}}{e}>0, &\text{if }i^*\in S^<(\vec x^k) \text{ and } |b_{i^*}^\prime|>0,\\
		&	x_{i^*}s^{k}_{i^*}<0, &\text{if }i^*\in S^{<}(\vec x^k)\text{ and } |b_{i^*}^\prime|=0,
	\end{aligned}
	\right.
\end{equation}
which directly yields $\widetilde{L}_{\theta}(\vec s^{k})<0$ through Lemma~\ref{prop:rs}.
\end{proof}

The property of strict descent is guaranteed by  Lemma~\ref{prop:rs} and plays an important role in the following local convergence. 

\begin{theorem}[$\mathbf{SIP}_{\theta}$: local convergence]
\label{thm:conver_3theta}
Suppose the sequences $\{\vec x^k\}$ and $\{r^k\}$ are generated by $\mathbf{SIP}_{\theta}$ with $\theta\in(0,1]$ from any {nonconstant} initial point $\vec x^0\in \mathbb{R}^n\setminus \{\vec0\}$. Then there must exist $N\in\mathbb{Z}^+$ and $r^*\in\mathbb{R}$ such that, for any $k > N$, 
$r^k=r^*$ and $\vec x^{k+1}$ are {nonconstant} local minimizers in $U(\vec x^{k+1})$.
\end{theorem}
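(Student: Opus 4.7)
The plan is to prove the theorem in three stages: (i) $\{r^k\}$ stabilizes in finitely many iterations, (ii) once stable, the subgradient optimality $\widetilde{L}_{\theta}(\vec s)=0$ holds \emph{uniformly} over $\partial H_{r^*}(\vec x^{k+1})$, and (iii) this uniform condition implies local minimality over $U(\vec x^{k+1})$.

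For stage (i), by Eq.~\eqref{eq:decrease} the sequence $\{r^k\}$ is monotonically non-increasing. From Eq.~\eqref{eq:teranry_vec} and the case analysis following Lemma~\ref{Thm:exact_solution}, the normalized iterate $\vec x^{k}/\|\vec x^{k}\|_{\infty}\in\{-1,0,1\}^n$ is a nonconstant ternary vector for every $k\ge 1$. Since $T$ is homogeneous of degree zero, $r^k=T(\vec x^k)$ depends only on this ternary normalization, and hence $\{r^k\}_{k\ge 1}$ takes values in the finite set $\{T(\vec\xi):\vec\xi\in\{-1,0,1\}^n\text{ nonconstant}\}$. A monotone sequence in a finite set must become eventually constant, so there exist $N\in\mathbb{Z}^+$ and $r^*\in\mathbb{R}$ with $r^k=r^*$ for all $k>N$.

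For stage (ii), fix $k>N$; the equality $r^{k+1}=r^k=r^*$ together with Eq.~\eqref{eq:decrease} forces $L_{\theta}(\vec x^{k+1},\vec s^k)=0$, which gives $\widetilde{L}_{\theta}(\vec s^k)=0$ because $\vec x^{k+1}$ solves the inner subproblem \eqref{eq2:twostep_x2_2}. Theorem~\ref{prop:S123_theta} then rules out the existence of any $\vec s\in\partial H_{r^*}(\vec x^k)$ with $\widetilde{L}_{\theta}(\vec s)<0$. Coupled with the trivial upper bound $\widetilde{L}_{\theta}(\vec s)\le L_{\theta}(\vec x^k/\|\vec x^k\|_1,\vec s)=0$, which follows from the $1$-homogeneity of $L_{\theta}$ in its first argument and the Euler identity $\langle\vec x^k,\vec s\rangle=H_{r^*}(\vec x^k)$ that makes $L_{\theta}(\vec x^k,\vec s)=0$ for every such subgradient, I conclude $\widetilde{L}_{\theta}(\vec s)=0$ for every $\vec s\in\partial H_{r^*}(\vec x^k)$. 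Applying the identical argument at index $k+1$ (using $r^{k+2}=r^*$) yields the key uniform inequality
\[
\theta\|\vec y\|_{\infty}+\tfrac{1-\theta}{e}\|\vec y\|_{1,d}\ge\langle\vec y,\vec s\rangle,\qquad\forall\,\vec y\in\mathbb{R}^n,\,\forall\,\vec s\in\partial H_{r^*}(\vec x^{k+1}).
\]

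For stage (iii), fix $\vec y\in U(\vec x^{k+1})$. The $1/2$-radius constraint in \eqref{neighbor} forces $\sign(y_i)=\sign(x^{k+1}_i)$ whenever $x^{k+1}_i\ne 0$, and consequently $V_t^\pm(\vec y)\subseteq S^\pm(\vec x^{k+1})$ for every $t>\|\vec y\|_\infty/2$. Using this level-set control together with the Lov\'asz-extension representation \eqref{eq:lovasz} of $\|\cdot\|_\infty$, $\|\cdot\|_{1,d}$, $I^+$, $N$ and Lemma~\ref{lem:subgradient} applied to the induced ternary vector $\vec y^0=\vec 1_{V_0^+(\vec y),V_0^-(\vec y)}$, I would construct a specific $\vec s\in\partial H_{r^*}(\vec x^{k+1})$ that simultaneously lies in $\partial H_{r^*}(\vec y)$, so the subgradient inequality tightens to $\langle\vec y,\vec s\rangle=H_{r^*}(\vec y)$. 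Substituting into the uniform inequality from stage (ii) gives $\theta e\|\vec y\|_\infty+(1-\theta)\|\vec y\|_{1,d}\ge eH_{r^*}(\vec y)=I^+(\vec y)+r^*N(\vec y)$, equivalent to $T(\vec y)\ge r^*$; nonconstancy of $\vec x^{k+1}$ is preserved throughout by the explicit construction in \textbf{Case 1} and \textbf{Case 2} of Section~\ref{sec:exact_prob}. The main obstacle is the common-subgradient construction: the natural candidate extends components of $\partial H_{r^*}(\vec x^{k+1})$ on the support $\{i:x^{k+1}_i\ne 0\}$ by the sign pattern of $\vec y$ on the complement, and verifying the subdifferential membership at both points requires a careful splitting of the Lov\'asz integral at $t=\|\vec y\|_\infty/2$ together with Lemma~\ref{lem:subgradient}, which is precisely where the $1/2$-radius in the definition \eqref{neighbor} plays its essential role.
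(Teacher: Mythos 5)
Your stages (i) and (ii) reproduce the paper's argument faithfully: by \eqref{eq:teranry_vec} the normalized iterates are nonconstant ternary vectors, so the monotone sequence $\{r^k\}$ takes finitely many values and stabilizes at some $r^*$, and once it stabilizes Theorem~\ref{prop:S123_theta} upgrades $\widetilde{L}_{\theta}(\vec s^{k})=0$ to $\widetilde{L}_{\theta}(\vec s)=0$ for \emph{every} $\vec s\in\partial H_{r^*}(\vec x^{k+1})$ (any $\vec s$ with $\widetilde{L}_{\theta}(\vec s)<0$ would force the selected subgradient to strictly decrease $r$, contradicting stabilization). The gap is in stage (iii), which is precisely the step you flag as ``the main obstacle'' and do not carry out. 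Moreover, the tool you propose there, Lemma~\ref{lem:subgradient}, points in the wrong direction: applied to $\vec y\in U(\vec x^{k+1})$ it gives $\partial f^L(\vec y)\subseteq\partial f^L(\vec y^0)$ where $\vec y^0=\vec 1_{V_0^+(\vec y),V_0^-(\vec y)}$ is the ternary vector induced by $\vec y$ \emph{itself}, and $\vec y^0$ generally differs from $\vec x^{k+1}/\|\vec x^{k+1}\|_\infty$ on $S^<(\vec x^{k+1})$, where components of $\vec y$ may be nonzero; so this lemma does not connect $\partial H_{r^*}(\vec y)$ to $\partial H_{r^*}(\vec x^{k+1})$.

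What is actually needed --- and what the paper proves --- is the inclusion $\partial H_{r^*}(\vec y)\subseteq\partial H_{r^*}(\vec x^{k+1})$ for all $\vec y\in U(\vec x^{k+1})$. Because $\vec x^{k+1}/\|\vec x^{k+1}\|_\infty\in\{-1,0,1\}^n$, the $1/2$-radius in \eqref{neighbor} gives $y_i>\frac12\|\vec y\|_\infty$ on $S^+(\vec x^{k+1})$, $y_i<-\frac12\|\vec y\|_\infty$ on $S^-(\vec x^{k+1})$, and $|y_i|<\frac12\|\vec y\|_\infty$ on $S^<(\vec x^{k+1})$; feeding these strict sign relations into the explicit interval characterizations of $\partial I^+$ and $\partial N$ from Section~\ref{sec:subgradient} yields $\partial I^+(\vec y)\subseteq\partial I^+(\vec x^{k+1})$ and $\partial N(\vec y)\subseteq\partial N(\vec x^{k+1})$, hence the claimed inclusion. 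Then any $\vec s^*\in\partial H_{r^*}(\vec y)$ simultaneously satisfies $\langle\vec y,\vec s^*\rangle=H_{r^*}(\vec y)$ (Euler's identity at $\vec y$) and $\widetilde{L}_{\theta}(\vec s^*)=0$ (your stage (ii), since $\vec s^*\in\partial H_{r^*}(\vec x^{k+1})$), and $T(\vec y)-r^*=\frac{e}{N(\vec y)}\,L_{\theta}(\vec y,\vec s^*)\ge 0$ follows at once. So your overall architecture is the right one, but the decisive subdifferential inclusion must be established from the explicit characterizations rather than from Lemma~\ref{lem:subgradient}, and without it the proof is incomplete.
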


\begin{proof}
First, we prove that the sequence $\{r^k\}$ takes finite values. It can be derived from Eq.~\eqref{eq:teranry_vec} that $\vec x^{k+1}/\|\vec x^{k+1}\|_{\infty}\in\{-1,0,1\}^n$ holds and $\{r^k\}$ takes a limited number of values. Moreover, Lemma~\ref{prop:rs} and Theorem~\ref{prop:S123_theta} indicate that there exists $N\in\mathbb{Z}^+$ and $r^*\in\mathbb{R}$ such that  for any $k>N$, satisfying 
\begin{align}
	r^0>r^1>\cdots>r^{N}&>r^{N+1}=r^{N+2}=\cdots=r^*,\label{eq:r_list}
\end{align}
and thus we have: 
\begin{enumerate}
	\item When $k\le N$,  $\widetilde{L}_{\theta}(\vec s^k)<0$ (see Eq.~\eqref{eq:sip_theta_thm}) holds and $\vec x^{k+1}$ is nonconstant
	because any constant vector $\vec y$ leads to $L_\theta(\vec y, \vec s^k)\geq 0$; 
	\item When $k > N$, $\forall\, \vec s\in\partial H_{r^k}(\vec x^{k})$ and $\vec l^k$ (see Eq.~\eqref{eq:vec_l}), Eq.~\eqref{eq:exact_x} holds
	and thus a nonconstant $\vec x^{k+1}$ can be selected through the procedure proposed in Section~\ref{sec:exact_prob}. 
\end{enumerate}

Then, we prove $\vec x^{k+1}$ is a local minimizer of $T(\cdot)$ over $U(\vec x^{k+1})$ when $k \ge N$. 
According to Eq.~\eqref{eq:teranry_vec}, we have
\begin{equation*}
	\begin{cases}
		y_i>\frac{1}{2}\|\vec y\|_{\infty},&\text{if }i\in S^+(\vec x^{k+1}),\\
		y_i<-\frac{1}{2}\|\vec y\|_{\infty},&\text{if }i\in S^-(\vec x^{k+1}),\\ 		
		|y_i|<\frac{1}{2}\|\vec y\|_{\infty},&\text{if }i\in S^<(\vec x^{k+1}),
	\end{cases} \quad \forall\,\vec y\in U(\vec x^{k+1}),
\end{equation*}
and thus
\begin{equation*}
	\partial I^+(\vec y) \subseteq \partial I^+(\vec x^{k+1}), \;\; \partial N(\vec y)\subseteq \partial N(\vec x^{k+1}) \Rightarrow \partial H_{r^*}\vec (\vec y)\subseteq \partial H_{r^*}(\vec x^{k+1}),	
\end{equation*}
implying that 
$\forall\,\vec s^*\in\partial H_{r^*}\vec (\vec y)$, we have
\begin{equation*}
	\begin{aligned}
		T(\vec y)-T(\vec x^{k+1})
		=&\frac{e}{N(\vec y)}\left(\theta\|\vec y\|_{\infty}+\frac{1-\theta}{e}\|\vec y\|_{1,d}-H_{r^*}(\vec y)\right)\\
		=&\frac{e}{N(\vec y)}\cdot L_{\theta}(\vec y,\vec s^*)
		\geq\frac{e}{N(\vec y)}\cdot \widetilde{L}_{\theta}(\vec s^*)=0,
	\end{aligned}
\end{equation*}
where we have used $T(\vec x^{k+1})=r^*$ in the first equality and $\vec s\in\partial H_{r^*}(\vec x^{k+1})$ in the last equality. 
\end{proof}

It is precisely because of the strict decent guarantee in Theorem~\ref{prop:S123_theta} and the local convergence in Theorems~\ref{thm:conver_3theta} that we are able to set the condition for exiting $\mathbf{SIP_{\theta}}$ iteration \eqref{iter2-cheeger_2} to be the objective function $T(\vec x)$ is unchanged (see Fig.~\ref{flowchart}), and to except that $\mathbf{SIP_{\theta}}$ has a fast convergence against the iteration steps (see Section~\ref{SG}). Furthermore, we can prove that $\mathbf{SIP_{\theta}}$ with $\theta=1$, i.e.,  $\mathbf{SIP}$,  is also locally converged in a discrete sense.

\begin{theorem}[$\mathbf{SIP}$: discrete local convergence]
\label{thm:discrete}
Suppose $\mathbf{SIP}$ converges to $\vec x^*$.  Then $\vec x^*\in C_B$.
\end{theorem}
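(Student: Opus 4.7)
The plan is to argue by contradiction using the strict descent guarantee (Theorem~\ref{prop:S123_theta}) specialized to $\theta=1$. Convergence of $\mathbf{SIP}$ to $\vec x^*$ means no further strict descent at the terminal iteration, so $\widetilde{L}(\vec s^k)\ge 0$; by the contrapositive of Theorem~\ref{prop:S123_theta} this forces $\widetilde{L}(\vec s)\ge 0$ for every $\vec s\in\partial H_{r^*}(\vec x^*)$, where $r^*:=B(\vec x^*)$. Suppose toward contradiction that $\vec x^*\notin C_B$: some $i\in V$ satisfies $B(R_i\vec x^*)<r^*$. The case $x_i^*=0$ is vacuous because $R_i\vec x^*=\vec x^*$, and by the reflection symmetry $\vec x^*\leftrightarrow -\vec x^*$ (which leaves $B$ invariant) we may assume $x_i^*>0$, i.e.\ $i\in S^+(\vec x^*)$.

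Next I would introduce the test vector $\vec y:=R_i\vec x^*/\|\vec x^*\|_1$, which satisfies $\|\vec y\|_1=1$. For any $\vec s\in\partial H_{r^*}(\vec x^*)$, the $1$-homogeneity of $H_{r^*}$ together with $B(\vec x^*)=r^*$ gives $\langle\vec x^*,\vec s\rangle=H_{r^*}(\vec x^*)=\|\vec x^*\|_\infty$; combined with $R_i\vec x^*=\vec x^*-2x_i^*\vec e_i$ this yields
\[
L(\vec y,\vec s)=\|\vec y\|_\infty-\langle\vec y,\vec s\rangle=\frac{2\,x_i^*\,s_i}{\|\vec x^*\|_1},
\]
so $\widetilde{L}(\vec s)\le 2x_i^*s_i/\|\vec x^*\|_1$. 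It therefore suffices to exhibit some $\vec s^\star\in\partial H_{r^*}(\vec x^*)$ with $s_i^\star<0$, since then $\widetilde{L}(\vec s^\star)<0$, contradicting the first paragraph.

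The remaining step is to show that the assumption $B(R_i\vec x^*)<r^*$ forces $s_i^{\min}<0$, where $[s_i^{\min},s_i^{\max}]$ is the $i$-th coordinate interval of $\partial H_{r^*}(\vec x^*)=(\partial I^+(\vec x^*)+r^*\partial N(\vec x^*))/e$. By the characterizations in Section~\ref{sec:subgradient}, $e\cdot s_i^{\min}$ coincides with the boundary indicator $b_i$ of Eqs.~\eqref{u1}--\eqref{hh} evaluated with $a_i=a_i^L$, so what needs to be verified is the equivalence $B(R_i\vec x^*)-r^*<0\Leftrightarrow b_i<0$. This is obtained by expanding $B(R_i\vec x^*)-r^*$ in terms of the incidence-based change $\Delta I^+=I^+(R_i\vec x^*)-I^+(\vec x^*)$ (which produces the $p_i-q_i$ contribution) and the median-based change $\Delta N=N(R_i\vec x^*)-N(\vec x^*)$ (which produces the $r^*a_i^L$ contribution), and checking that the two expressions agree up to a strictly positive common factor.

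The main obstacle is precisely this last equivalence. Both the definition of $a_i^L$ in Eq.~\eqref{eq:a_bound_l} and the denominator $\min\{\vol(S^+(\vec x^*)),\vol(S^-(\vec x^*))\}$ appearing in $B$ split into subcases according to whether $i$ sits at the median level $\alpha$ of $\vec x^*$, the size $|S^\alpha(\vec x^*)|$, and which of the two side volumes is smaller before versus after flipping $x_i^*$. Matching $b_i$ against $B(R_i\vec x^*)-r^*$ in each subcase is routine algebra but requires careful bookkeeping, and is the most delicate part of the proof.
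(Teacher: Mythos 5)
Your proposal is correct and follows essentially the same route as the paper: reduce the claim to exhibiting a subgradient $\vec s\in\partial H_{r^*}(\vec x^*)$ with $x_i^*s_i<0$ (your identity $L(R_i\vec x^*/\|\vec x^*\|_1,\vec s)=2x_i^*s_i/\|\vec x^*\|_1$ is a clean way to make that reduction, playing the role of Lemma~\ref{prop:rs} in the paper), and then tie the sign of the extreme subgradient component in coordinate $i$ to the sign of $B(R_i\vec x^*)-B(\vec x^*)$. Be aware, however, that the case analysis you defer as routine bookkeeping is the entire substance of the paper's proof --- it computes $I^+(\vec x^*)-I^+(R_i\vec x^*)=\pm\sum_{j}2w_{ij}x_j^*$, splits $N(\vec x^*)-N(R_i\vec x^*)$ into three cases via $\Delta=\sum_j\mu_jx_j^*-\mu_ix_i^*$, and exhibits explicit $u_i\in(\partial I^+(\vec x^*))_i$ and $v_i\in(\partial N(\vec x^*))_i$ making $x_i^*(u_i+r^*v_i)<0$ --- so while your sketch does close (the three $\Delta$-cases line up exactly with the three branches of $a_i^L$ in Eq.~\eqref{eq:a_bound_l}), it omits the only nontrivial part of the argument.
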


%

\begin{proof}
According to the solution selection procedure in Section~\ref{sec:exact_prob}, 
$\mathbf{SIP}$ must produce nonconstant binary valued solutions. Namely, $\vec x^*/\|\vec x^*\|_\infty \in \{-1, 1\}^n$.  
For simplicity, the superscript $*$ is neglected in the proof. Suppose the contrary $\vec x\notin C_B$ that there exists $i\in V$ satisfying 
\begin{equation}\label{eq:ass}
	B(R_i\vec x)<B(\vec x) \quad \Leftrightarrow \quad H_{r}(\vec x)-H_{r}(R_i\vec x)<0,\quad r=B(\vec x),
\end{equation}
because of $\|R_i\vec x\|_\infty=\|\vec x\|_\infty$. 		
Then, we have
\begin{align}
	I^+(\vec x)-I^+(R_i\vec x)=\pm\sum\limits_{j:\{j,i\}\in E} 2w_{ij}x_j,\,\,i\in S^\pm(\vec x), \label{eq:delta_iplus} \\
	\begin{aligned}
		N(\vec x)-N(R_i\vec x)
		=&-|\Delta-\mu_i x_i|+|\Delta+\mu_i x_i|
		\\
		=&\begin{cases}
			2\mu_ix_i,&\text{if }\Delta> \mu_i\|\vec x\|_{\infty},\\
			2\Delta x_i,&\text{if }\Delta \le \mu_i\|\vec x\|_{\infty},\\ 
			-2\mu_i x_i,&\text{if }\Delta< -\mu_i\|\vec x\|_{\infty},
		\end{cases}
	\end{aligned}
\end{align}
where $\Delta=\sum_{j=1}^n\mu_jx_j-\mu_ix_i$. Thus, it can be derived that 
\begin{equation}\label{eq:mmm}
	\begin{cases}
		1\in\median(\vec x)\text{ and }1\in\median(R_i\vec x),&\text{if }\Delta> \mu_i\|\vec x\|_{\infty},\\
		x_i\in\median(\vec x)\text{ and }-x_i\in\median(R_i\vec x),&\text{if }\Delta\leq \mu_i\|\vec x\|_{\infty},\\
		-1\in\median(\vec x)\text{ and }-1\in\median(R_i\vec x),&\text{if }\Delta<-\mu_i\|\vec x\|_{\infty}.
	\end{cases}
\end{equation}			
Let $u_i\in(\partial I^+(\vec x))_i$ with 
$$
u_i=\sum\limits_{j:\{j,i\}\in E} 2w_{ij}z_{ij},\,\,\,z_{ij} =x_j/\|\vec x\|_{\infty}\in \sgn(x_i+x_j),
$$
and $v_i\in(\partial N(\vec x))_i$ with 
$$
v_i =\begin{cases}
	\mu_i,&\text{if }\Delta> \mu_i\|\vec x\|_{\infty},\\
	\Delta,&\text{if }\Delta \le \mu_i\|\vec x\|_{\infty},\\ 		
	-\mu_i,&\text{if }\Delta< -\mu_i\|\vec x\|_{\infty}.
\end{cases}
$$
Accordingly, combining Eqs.~\eqref{eq:sub_s},  \eqref{eq:ass} and \eqref{eq:mmm} together 
leads to 
\begin{equation}
	\label{xisileq0}
	x_is_i=\frac{x_i u_i+rx_iv_i}{e}<0,
\end{equation}
which contradicts Lemma~\ref{prop:rs}.
\end{proof}



Finally, we would like to point out that either $\mathbf{IP}$ in \cite{HeinBuhler2010,Chang2015} or $\mathbf{SIP}_{\theta}$ with $\theta\in(0, 1)$ is unable to guarantee a discrete local minimum
as illustrated by Examples~\ref{ex:ip} and \ref{ex:sip_theta} below. 

\begin{example}
\label{ex:ip}
Let $G=(V,E)$ be the standard square graph of four vertices, i.e., $V=\{1,2,3,4\}$ and $E=\{\{1,2\},\{2,3\},\{3,4\},\{4,1\}\}$. Then $I(\vec x)=|x_1-x_2|+|x_2-x_3|+|x_3-x_4|+|x_4-x_1|$ and $N(\vec x)=\min_{c\in\mathbb{R}}\sum_{i=1}^42|x_i-c|$ with $\mu$ being the degree weights. 
We start $\mathbf{IP}$ in Eq.~\eqref{iter1-cheeger} with $\vec x^0=\frac{1}{4}(1,1,-1,1)$, $r^0=1$ and $\vec v^0=(\frac{2}{3},\frac{2}{3},-2,\frac{2}{3})$,
and find that the inner subproblem \eqref{eq:twostep_x2} takes zero as its minimum value. That is, $\mathbf{IP}$ cannot decrease the value of $I(\vec x)/N(\vec x)$ in Eq.~\eqref{eq:twostep_r2} and thus terminates with $\vec x^1=\vec x^0$. On the other hand, let $\vec y=R_4\vec x^1$, thus $B(\vec y)=\frac12<B(\vec x^1)=1$, indicating $\vec x^1\notin C_B$. 
\end{example}

\begin{example}
\label{ex:sip_theta}
Let $G=(V,E)$ be the standard path graph of six vertices, i.e., $V=\{1,2,3,4,5,6\}$ and $E=\{\{1,2\},\{2,3\},\{3,4\},\{4,5\},\{5,6\}\}$. Then $I^+(\vec x)=|x_1+x_2|+|x_2+x_3|+|x_3+x_4|+|x_4+x_5|+|x_5+x_6|$ and $N(\vec x)=\min_{c\in\mathbb{R}}\sum_{i=1}^6\mu_i|x_i-c|$ with $\mu$ being degree wights, i.e. $\mu=(1,2,2,2,2,1)$. 
We start $\mathbf{SIP}_{\theta}$ in Eq.~\eqref{iter2-cheeger_2} with $\theta=0.2$, $\vec x^0=\frac{1}{3}(0,1,0,1,1,0)$ and $r^0=0.2$, 
and calculate $\vec b^0=(0.8,2,1.6,2,2,0.8)$ (see Eq.~\eqref{u1}) which yields an empty $V_{\vec b^0}^\prime$ (see Eq.~\eqref{eq:wanted_ub1}). 
According to Theorem~\ref{prop:S123_theta}, 
$\mathbf{SIP}_{\theta}$ cannot improve the solution quality and then outputs 
$\vec x^1=\vec x^0$. However, $\vec x^1\notin C_T$ because 
$T(R_2\vec x^1)=\frac{2}{15}<T(\vec x^1)=\frac15$, where $C_T = \{\vec x\in\mathbb{R}^n \big| T(\vec y)\geq T(\vec x), \, \forall\,\vec y\in\{R_i\vec x: i\in\{1,\ldots,n\}\}\}$. 

\end{example}

\section{Numerical experiments}
\label{sec:experiments}	

This section is devoted into conducting performance evaluation of $\mathbf{SIP}$ and $\mathbf{SIP}$-$\mathbf{perturb}$
against $\mathbf{IP}$ and \texttt{Gurobi}, respectively, for Cheeger cut and Sparsest cut on a dataset comprising 30 graphs with positive weights from G-set\footnote{Downloaded from \href{https://web.stanford.edu/~yyye/yyye/Gset/}{https://web.stanford.edu/$\sim$yyye/yyye/Gset/}}. For initialization, we adopt the second smallest eigenvector of the normalized graph Laplacian, which is a common practice in the standard spectral clustering \cite{von2007tutorial}. All the algorithms are implemented using Matlab (r2019b) on the computing platform of 2*Intel Xeon E5-2650-v4 (2.2GHz, 30MB Cache, 9.6GT/s QPI Speed, 12 Cores, 24 Threads) with 128GB Memory.

\subsection{Comparisons between $\mathbf{SIP}$ and $\mathbf{IP}$}
\label{SIP-IP}

\begin{figure}[htbp]
\centering
\subfigure[Cheeger cut.]{\includegraphics[scale=0.14]{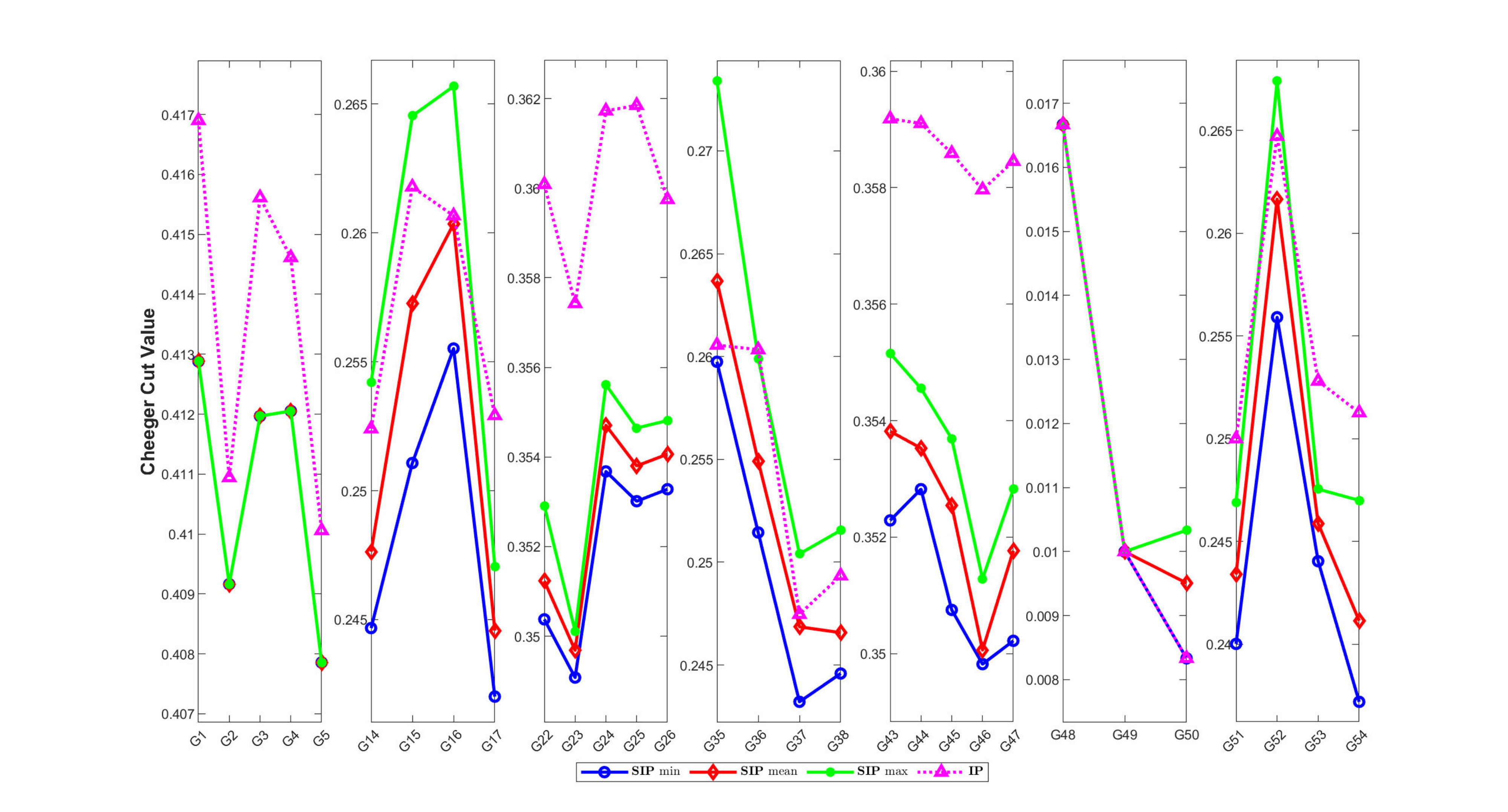}}
\subfigure[Sparsest cut.]{\includegraphics[scale=0.14]{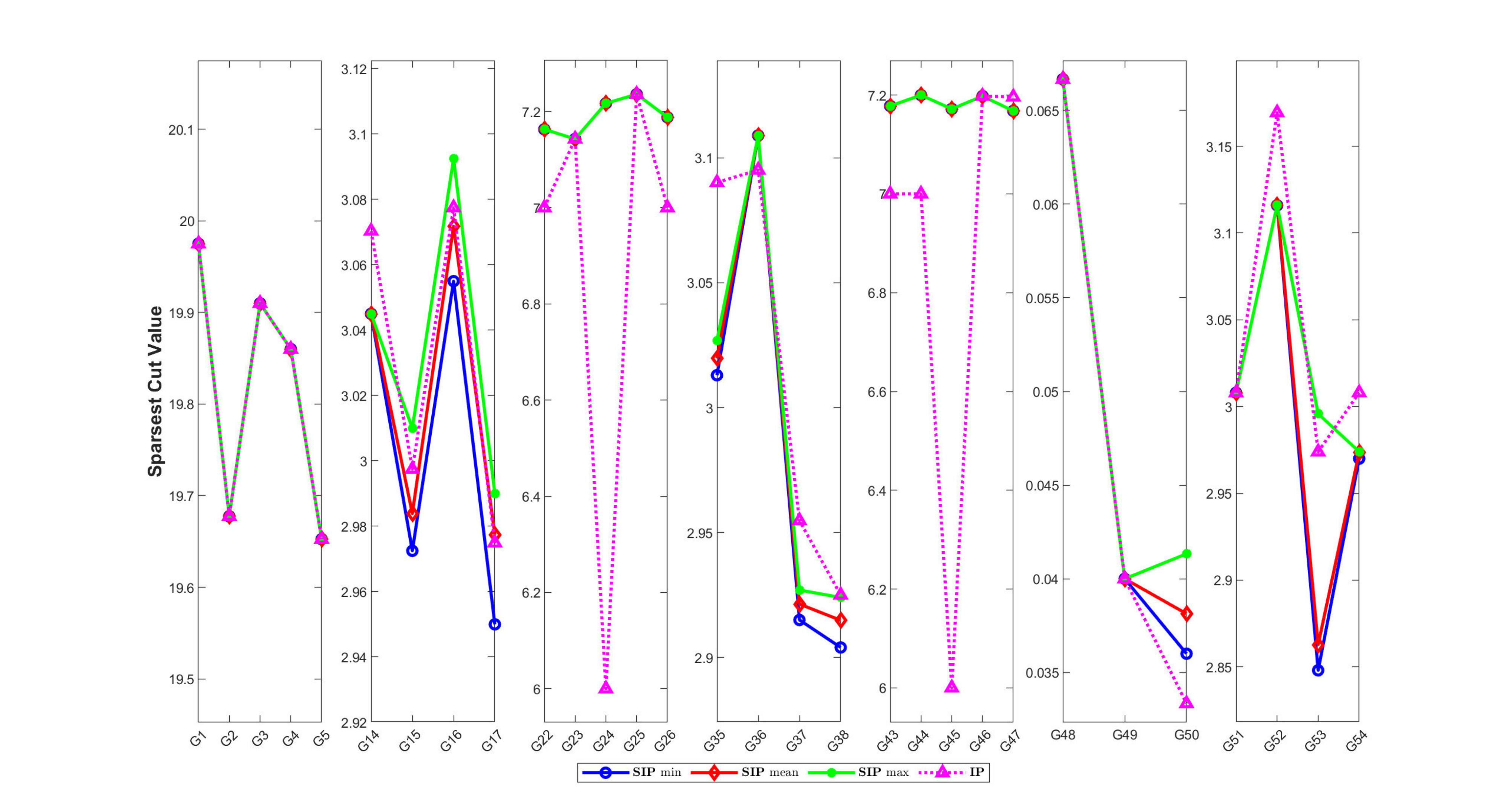}}
\caption{\small The minimum, mean, and maximum objective function values obtained by 40 runs of $\mathbf{SIP}$. The dotted lines present the corresponding values generated by $\mathbf{IP}$.}
\label{fig::compare}
\end{figure}

\begin{table}[htbp]
\centering
\caption{\small The average wall-clock time in seconds obtained by {$\mathbf{SIP}$} and $\mathbf{IP}$ for Cheeger cut and Sparsest cut on G-set. It is observed that {$\mathbf{SIP}$} exhibits faster execution time in comparison to $\mathbf{IP}$. Specifically, the running time cost of $\mathbf{IP}$ is at least 13.19 times that of {$\mathbf{SIP}$} (see G52 for Sparsest cut).}
\small     
\renewcommand{\arraystretch}{1.0}
\setlength{\tabcolsep}{2.1mm}{
	\begin{tabular}{|c|c|c|cc|cc|}
		\hline
		\multirow{2}{*}{instance} & \multirow{2}{*}{$|V|$} & \multirow{2}{*}{$|E|$} & \multicolumn{2}{c|}{Cheeger cut} & \multicolumn{2}{c|}{Sparsest cut} \\ \cline{4-7} 
		&                        &                        & {$\mathbf{SIP}$}           & $\mathbf{IP}$         & {$\mathbf{SIP}$}           & $\mathbf{IP}$         \\ \hline
		G1                        & 800                    & 19176                  & 0.3772                        & 5.7441                       & 0.3691                        & 6.5298                       \\
		G2                        & 800                    & 19176                  & 0.3429                        & 5.3790                       & 0.3742                        & 6.0206                       \\
		G3                        & 800                    & 19176                  & 0.3612                        & 5.4288                       & 0.3732                        & 5.6687                       \\
		G4                        & 800                    & 19176                  & 0.3449                        & 6.0036                       & 0.3737                        & 5.5978                       \\
		G5                        & 800                    & 19176                  & 0.3588                        & 6.6240                       & 0.3547                        & 5.5024                       \\
		G14                       & 800                    & 4694                   & 0.1433                        & 4.6874                       & 0.0961                        & 1.3998                       \\
		G15                       & 800                    & 4661                   & 0.1507                        & 3.1754                       & 0.1019                        & 2.9464                       \\
		G16                       & 800                    & 4672                   & 0.1551                        & 5.2827                       & 0.1079                        & 2.9248                       \\
		G17                       & 800                    & 4667                   & 0.1341                        & 2.4917                       & 0.0967                        & 3.7529                       \\
		G22                       & 2000                   & 19990                  & 0.4486                        & 16.7857                      & 0.4019                        & 14.2918                      \\
		G23                       & 2000                   & 19990                  & 0.4469                        & 20.2874                      & 0.3918                        & 10.7292                      \\
		G24                       & 2000                   & 19990                  & 0.4347                        & 17.2368                      & 0.3630                        & 13.4280                      \\
		G25                       & 2000                   & 19990                  & 0.5343                        & 15.2183                      & 0.3938                        & 10.8652                      \\
		G26                       & 2000                   & 19990                  & 0.4570                        & 15.2113                      & 0.3929                        & 14.1498                      \\
		G35                       & 2000                   & 11778                  & 0.4496                        & 15.0911                      & 0.3102                        & 8.0580                       \\
		G36                       & 2000                   & 11766                  & 0.4009                        & 9.2911                       & 0.2469                        & 7.8442                       \\
		G37                       & 2000                   & 11785                  & 0.3929                        & 11.7091                      & 0.2628                        & 10.1999                      \\
		G38                       & 2000                   & 11779                  & 0.3927                        & 20.3054                      & 0.2703                        & 9.8490                       \\
		G43                       & 1000                   & 9990                   & 0.1992                        & 5.3609                       & 0.1926                        & 3.9212                       \\
		G44                       & 1000                   & 9990                   & 0.2138                        & 5.5295                       & 0.1698                        & 4.0186                       \\
		G45                       & 1000                   & 9990                   & 0.2143                        & 5.2037                       & 0.1824                        & 3.9411                       \\
		G46                       & 1000                   & 9990                   & 0.2155                        & 5.3510                       & 0.1957                        & 3.0092                       \\
		G47                       & 1000                   & 9990                   & 0.2059                        & 7.3416                       & 0.1993                        & 3.0699                       \\
		G48                       & 3000                   & 6000                   & 0.1210                        & 2.0729                       & 0.1223                        & 1.7856                       \\
		G49                       & 3000                   & 6000                   & 0.1245                        & 2.1380                       & 0.1172                        & 1.8393                       \\
		G50                       & 3000                   & 6000                   & 0.1488                        & 2.1195                       & 0.1337                        & 1.7826                       \\
		G51                       & 1000                   & 5909                   & 0.1747                        & 8.3686                       & 0.1214                        & 2.6072                       \\
		G52                       & 1000                   & 5916                   & 0.1833                        & 4.3485                       & 0.1318                        & 1.7393                       \\
		G53                       & 1000                   & 5914                   & 0.1826                        & 5.4845                       & 0.1708                        & 6.1148                       \\
		G54                       & 1000                   & 5916                   & 0.1876                        & 7.1092                       & 0.1263                        & 2.6539                      \\ \hline
	\end{tabular}
}
\label{tab:time}
\end{table}

We rerun $\mathbf{SIP}$ in Eq.~\eqref{iter1-cheeger_2} and $\mathbf{IP}$ in Eq.~\eqref{iter1-cheeger} 40 times.
Each run of $\mathbf{SIP}$ terminates when $B(\vec x)$ in Eq.~\eqref{eq:twostep_r2_2} fails to decrease. For $\mathbf{IP}$, we use the \texttt{MOSEK} solver with ``\texttt{cvx\_precision high}" in CVX to solve the inner subproblem \eqref{eq:twostep_x2}, 
and use the resulting minimum value as the termination indicator. That is, when it reaches 0 $\mathbf{IP}$ stops \cite{HeinBuhler2010}.  Notably, the subgradient selection \eqref{eq:twostep_s2} utilized in $\mathbf{IP}$ follows the approach outlined in \cite{Chang2015} and remains deterministic.

Fig.~\ref{fig::compare} plots the minimum, mean and maximum objective function values of Cheeger cut and Sparsest cut obtained in 40 runs of $\mathbf{SIP}$ and $\mathbf{IP}$, accompanied by the average runtime in Table~\ref{tab:time}. It can be readily seen there that 
the numerical solutions generated by $\mathbf{IP}$ are identical (see the pink dotted lines in Fig.~\ref{fig::compare}), thereby implying that there is no randomness in its implementation.  Fig.~\ref{fig::compare} reveals that $\mathbf{SIP}$ outperforms $\mathbf{IP}$ on the majority of the graph instances for both Cheeger cut and Sparsest cut. Specifically, $\mathbf{SIP}$ achieves superior approximations for Cheeger cut across all graphs (see the blue solid lines v.s. the pink dotted lines in (a)), and better approximations for Sparsest cut than $\mathbf{IP}$ except on G22, G24, G26, G36, G43, G44 G45 and G50.

In terms of time complexity, $\mathbf{SIP}$ primarily requires a simple sort operation in the first subproblem~\eqref{eq:twostep_x2_2} (see Lemma~\ref{Thm:exact_solution}), with a worst-case computational cost of $\mathcal{O}(n^2)$, while the \texttt{MOSEK} solver required by $\mathbf{IP}$ employs a primal-dual interior-point algorithm with a worst-case cost of $\mathcal{O}(n^{3.5})$. In practice, $\mathbf{SIP}$ indeed exhibits significantly faster compared to $\mathbf{IP}$ (see Table~\ref{tab:time}). On average, the time required for each graph by $\mathbf{IP}$ is 28.99 times greater than that of $\mathbf{SIP}$ for Cheeger cut, and 24.66 times greater for Sparsest cut. Finally, we would like to emphasize that this superior computational efficiency is inherited from the closed-form of the inner subproblem solution in Lemma~\ref{Thm:exact_solution}.


\subsection{Comparisons between $\mathbf{SIP}$-$\mathbf{perturb}$ and \texttt{Gurobi}}
\label{SG}

A more detailed numerical investigation reveals that $\mathbf{SIP}$ exhibits a rapid decline in the objective function $B(\vec x)$, which is also implied early by the strict decent guarantee in Theorem~\ref{prop:S123_theta} and the local convergence in Theorems~\ref{thm:conver_3theta} and \ref{thm:discrete}. Fig~\ref{fig::si1} displays a histogram of the relative errors 
\begin{equation}
\label{eq:relative1}
\frac{B(\vec x^6)-B(\vec x^*)}{B(\vec x^*)}
\end{equation}
for all $40\times 30=1200$ runs, where $\vec x^6$ gives the numerical solution at the 6-th iteration and $\vec x^*$ the corresponding local optimum of $\mathbf{SIP}$. We are able to observe there that, (1) for Cheeger cut, all runs achieve relative errors within $6.71\%$, and $95\%$ of them are within $3.91\%$; (2) for Sparsest cut, all runs achieve relative errors within $8.27\%$, and $95\%$ of them are within $3.02\%$. 
Therefore, this fast convergence of $\mathbf{SIP}$ combined with its superior computational efficiency as already demonstrated in Section~\ref{SIP-IP} 
leaves us plenty of room for improvement by $\mathbf{SIP}$-$\mathbf{perturb}$ (see Fig.~\ref{flowchart}).
When $\mathbf{SIP}$ is not capable of decreasing the objective function $B(\vec x)$,
$\mathbf{SIP}$-$\mathbf{perturb}$ has recourse to $\mathbf{SIP}_{\theta}$ \eqref{iter2-cheeger_2} that has been developed within exactly the same iteration algorithm framework and shares the same objective function in the subgradient selection phase as $\mathbf{SIP}$.

\begin{figure}[htbp]
\centering
\subfigure[Cheeger cut.]{\includegraphics[scale=0.19]{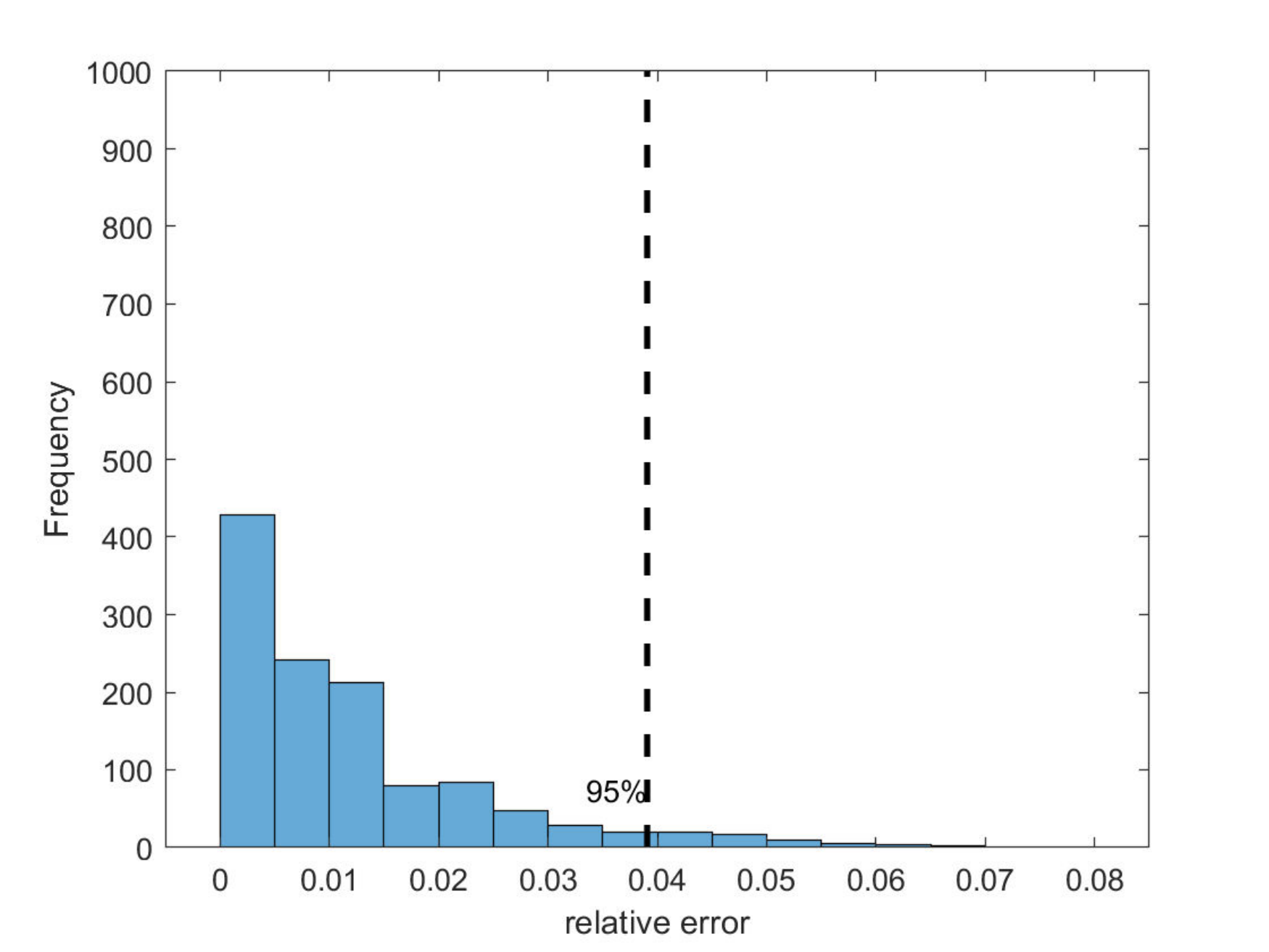}}
\subfigure[Sparsest cut.]{\includegraphics[scale=0.19]{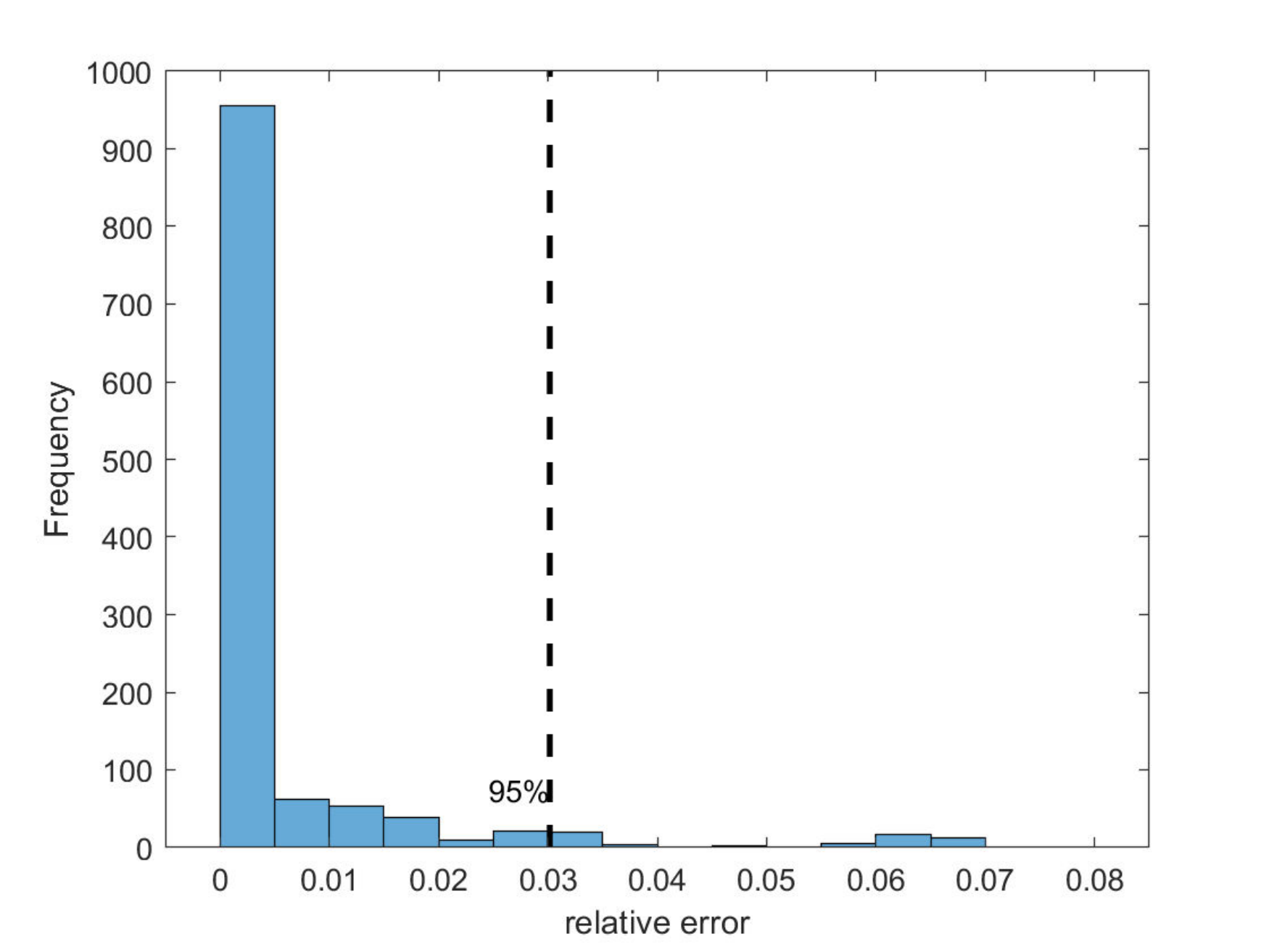}}
\caption{\small Fast convergence of $\mathbf{SIP}$: Histograms of the relative errors (see Eq.~\eqref{eq:relative1}) for all $40\times 30$ runs.} \label{fig::si1}
\end{figure}

In order to fully exploit the fast convergence implied by the strict decent guarantee in Theorem~\ref{prop:S123_theta} and the local convergence in Theorems~\ref{thm:conver_3theta} as well as the superior computational efficiency inherited from the closed-form solution to the inner subproblem in Lemma~\ref{Thm:exact_solution}, $\mathbf{SIP}$-$\mathbf{perturb}$ adopts $\mathbf{SIP}_{\theta}$ as a perturbation of $\mathbf{SIP}$.
To this end, we need to analyze how to adjust the parameter $\theta$ at each time step in the first place. Considering several typical regular graphs, such as the Petersen graph (denoted as $P_{10}$), the circle graph (e.g. $C_{10}$) and the complete graph (e.g. $K_{10}$), Fig.~\ref{fig1} plots their $h_{\theta}(G)$ as $\theta$ increases from $0$ to $1$. It can be easily seen there that $h_{\theta}(G)$ monotonically increases from the trivial value $0$ at $\theta=0$ to $h(G)$ after a certain threshold. Moreover, Fig.~\ref{fig:petersen} displays the optimal solutions corresponding to $h_{0}(G)$, $h_{\frac12}(G)$ and  $h_{1}(G)$ (i.e., $h(G)$)
on the Petersen graph and clearly demonstrates an increase in $h_{\theta}(G)$ but a decrease in $|(V_1\cup V_2)^c|$.
Therefore, we know that the parameter $\theta$ controls the portion of the un-partitioned vertices: A smaller $\theta$ indicates a larger number of un-partitioned vertices. Accordingly, we propose to randomly select $\theta$ from a uniform distribution on the interval $[0.3,0.8]$
before each call for $\mathbf{SIP}_{\theta}$, given the low edge density across all graph instances as shown in the first three columns of 
Table~\ref{tab:time}. It is noteworthy that the selection of $\theta$ here is not unique but  suitable, since the lower bound $0.3$ and the upper bound $0.8$ respectively sit around the midpoint and the endpoint of each ascending slope observed in Fig.~\ref{fig1}. A very small value of $\theta$ could potentially lead to an overabundance of vertices in the ``un-partitioned" part (see Fig.~\ref{fig:petersen}) and result in significant deviations of $h_{\theta}(G)$ from $h(G)$ (see Fig.~\ref{fig1}), consequently causing the undergoing ternary-valued solutions to significantly diverge from the current binary-valued ones. Conversely, a very large value of $\theta$ may be basically the same as the original $h(G)$ (see Fig.~\ref{fig1}) and thus may introduce an invalid perturbation. The adjustable $\theta$ enhances the diversification of $\mathbf{SIP}$-$\mathbf{perturb}$, ultimately yielding higher quality solutions (see Tables \ref{tab:cheeger} and \ref{tab:sparsest}).

%




\begin{figure}[htbp]
\centering
\includegraphics[scale=0.22]{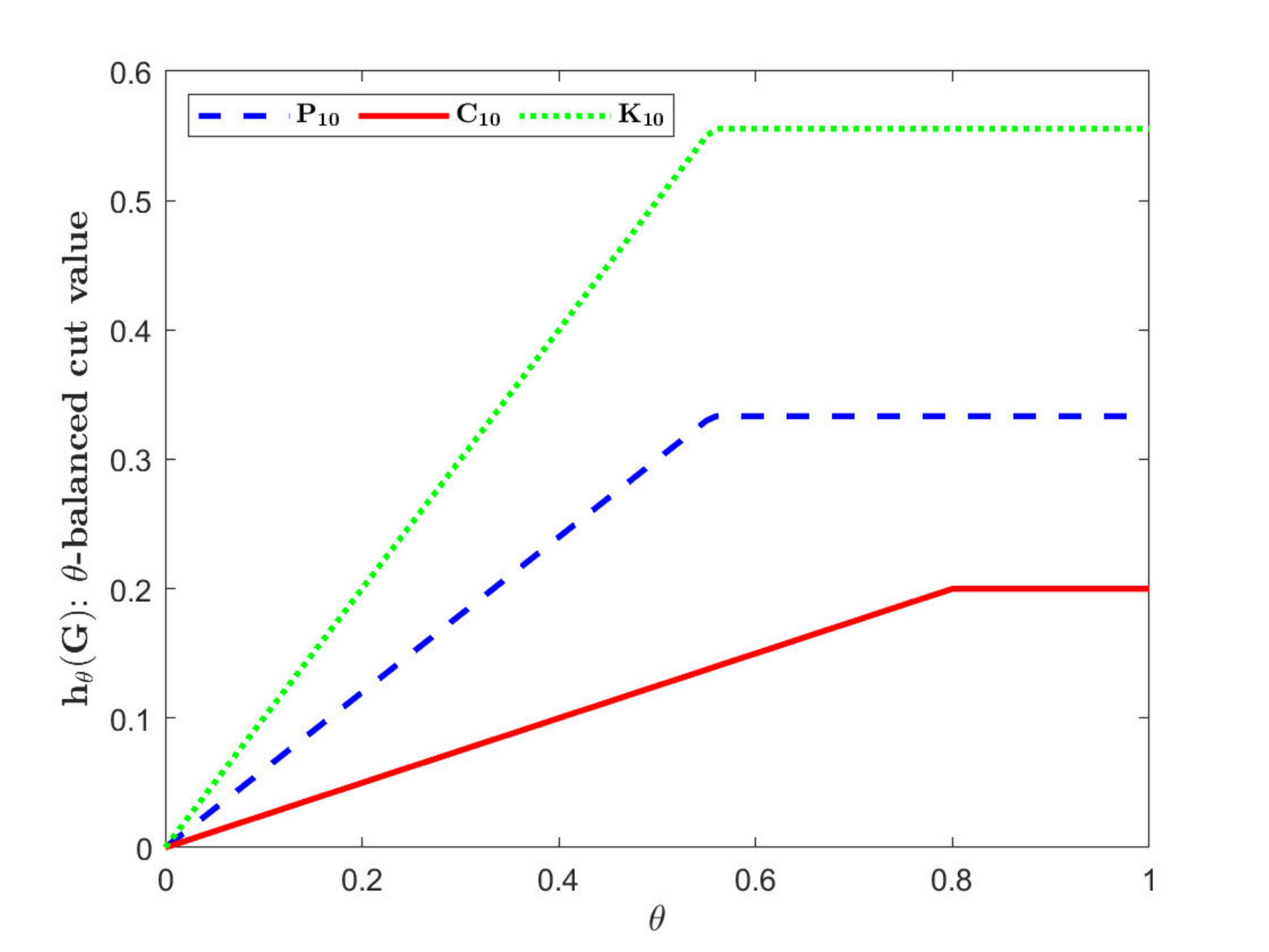}
\caption{\small The $\theta$-balanced cut values $h_{\theta}(G)$ on $P_{10}$, $C_{10}$ and $K_{10}$. Here $\theta$ increase from $0$ to $1$  with a fixed step of $0.01$ and the weight $\mu_i$ on any $i\in V$ is $d_i$,  the degree of the $i$-th vertex.}
\label{fig1}
\end{figure}

\begin{figure}[htbp] 
\centering
\includegraphics[scale=0.3]{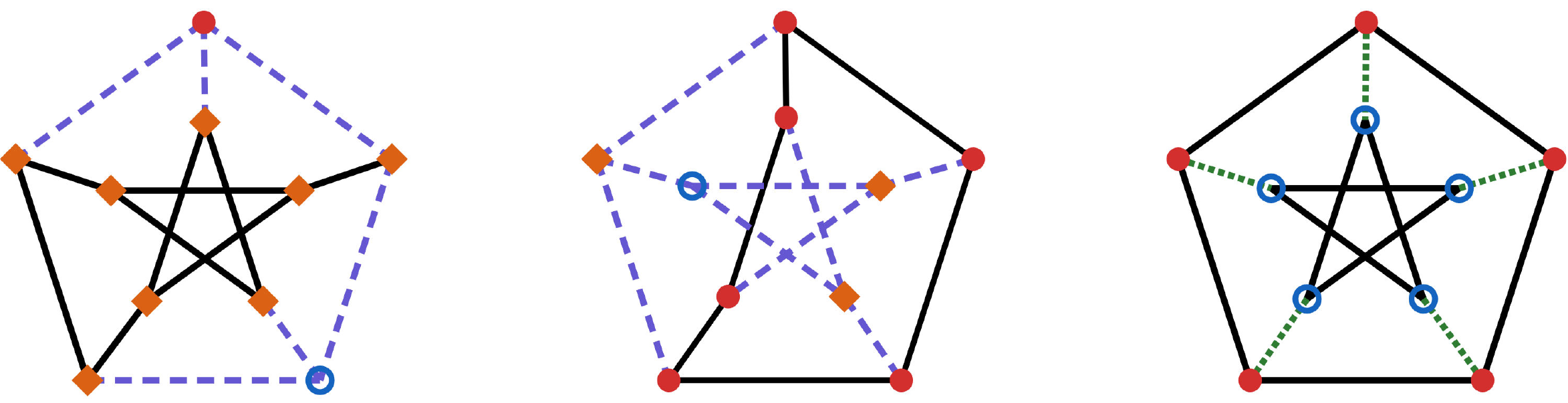}
\caption{\small $h_{0}(G)$ (left) {\it v.s.} $h_{\frac{1}{2}}(G)$ (center) {\it v.s.} $h_{1}(G)\equiv h(G)$ (right) on the Petersen graph: An example of the partitions $(V_1, V_2)\in\tc(V)$ for $V$ is displayed in the red bullets ($V_1$), blue circles ($V_2$) and orange squares ($(V_1\cup V_2)^c$). The dotted green lines represent $E(V_1,V_2)$, while $E(V_1\cup V_2,(V_1\cup V_2)^c)$ are shown in the dashed purple lines. We have $|(V_1\cup V_2)^c|=8$, $3$ and $0$ for $\theta=0$, $\frac{1}{2}$ and $1$, respectively, while the corresponding values of $h_{\theta}(G)$ are $0$, $\frac{3}{10}$ and $\frac{5}{15}$. }
\label{fig:petersen}
\end{figure}


To demonstrate the effect of $\mathbf{SIP}_{\theta}$ in improving the solution quality, 
we first perform 40 repeated runs of $\mathbf{SIP}$-$\mathbf{perturb}$ with $T_{\theta}=1$ for each graph, where $T_\theta$ gives the number of calling $\mathbf{SIP}_{\theta}$ (see Fig.~\ref{flowchart}). Here $T_{\theta}=1$ means, $\mathbf{SIP}$-$\mathbf{perturb}$ = $\mathbf{SIP} \rightarrow \mathbf{SIP}_{\theta} \rightarrow \mathbf{SIP}$. Namely, we call $\mathbf{SIP}$ twice for each run, and then calculate the relative gain
\begin{equation}
\label{eq:relative2}
\frac{B(\vec x^*)-B(\vec y^*)}{B(\vec x^*)}
\end{equation}
where $\vec x^*$ is the output of the first call of $\mathbf{SIP}$, and $\vec y^*$ the output of $\mathbf{SIP}$-$\mathbf{perturb}$.
A histogram of the relative gains is displayed in Fig.~\ref{fig::si2}. It is intuitive that $\mathbf{SIP}_{\theta}$ is an efficient perturbation: (1) For Cheeger cut, $79.67\%$ of the runs  are improved, $71.25\%$ have relative gain more than $0.5\%$, and the highest relative gain is $6.10\%$; (2) 
for Sparsest cut, $89.33\%$ of the runs are improved, $86.75\%$ have relative gain more than $0.5\%$, and the highest relative gain is $7.27\%$.

\begin{figure}[htbp]
\centering
\subfigure[Cheeger cut.]{\includegraphics[scale=0.19]{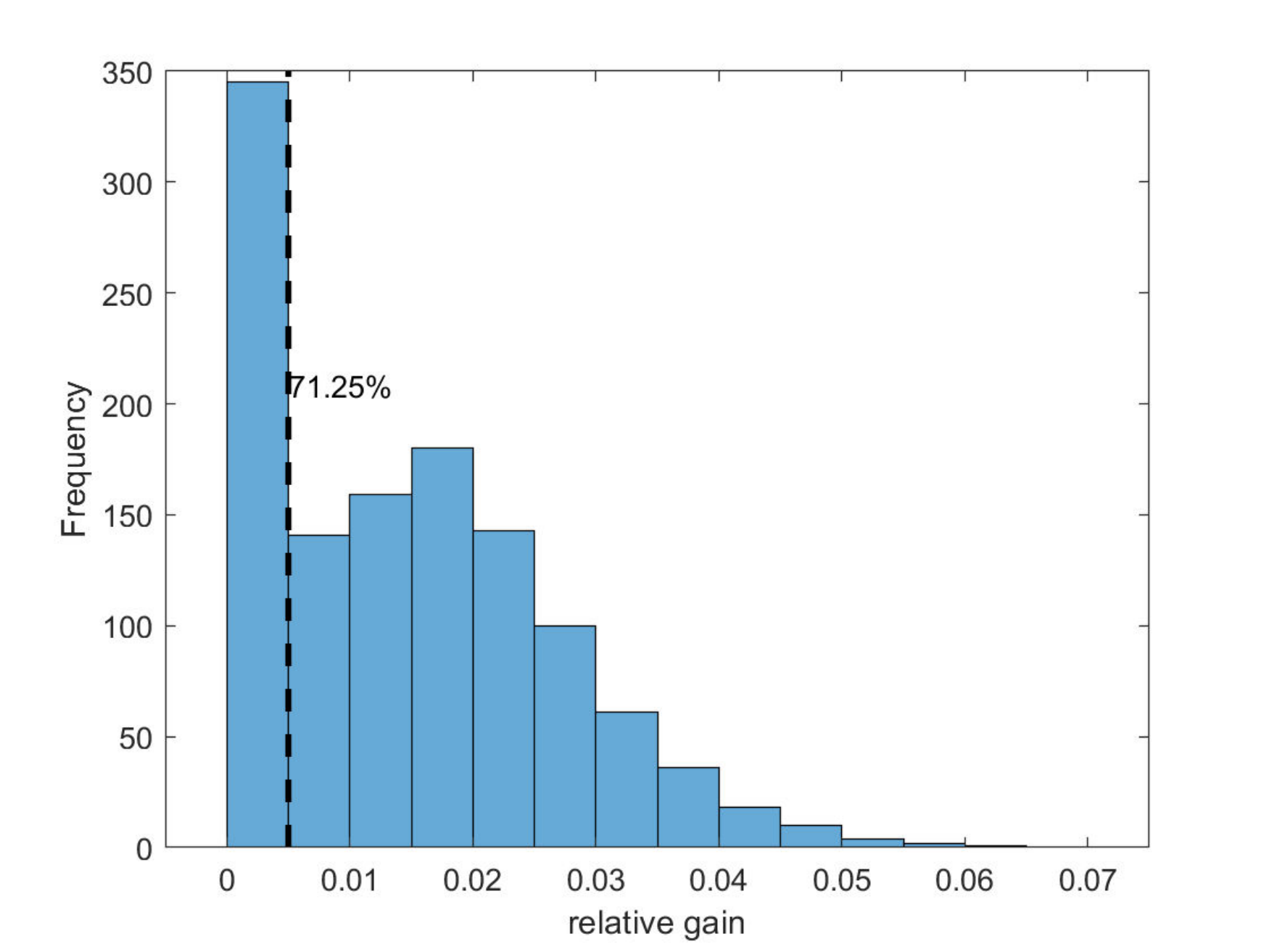}}
\subfigure[Sparsest cut.]{\includegraphics[scale=0.19]{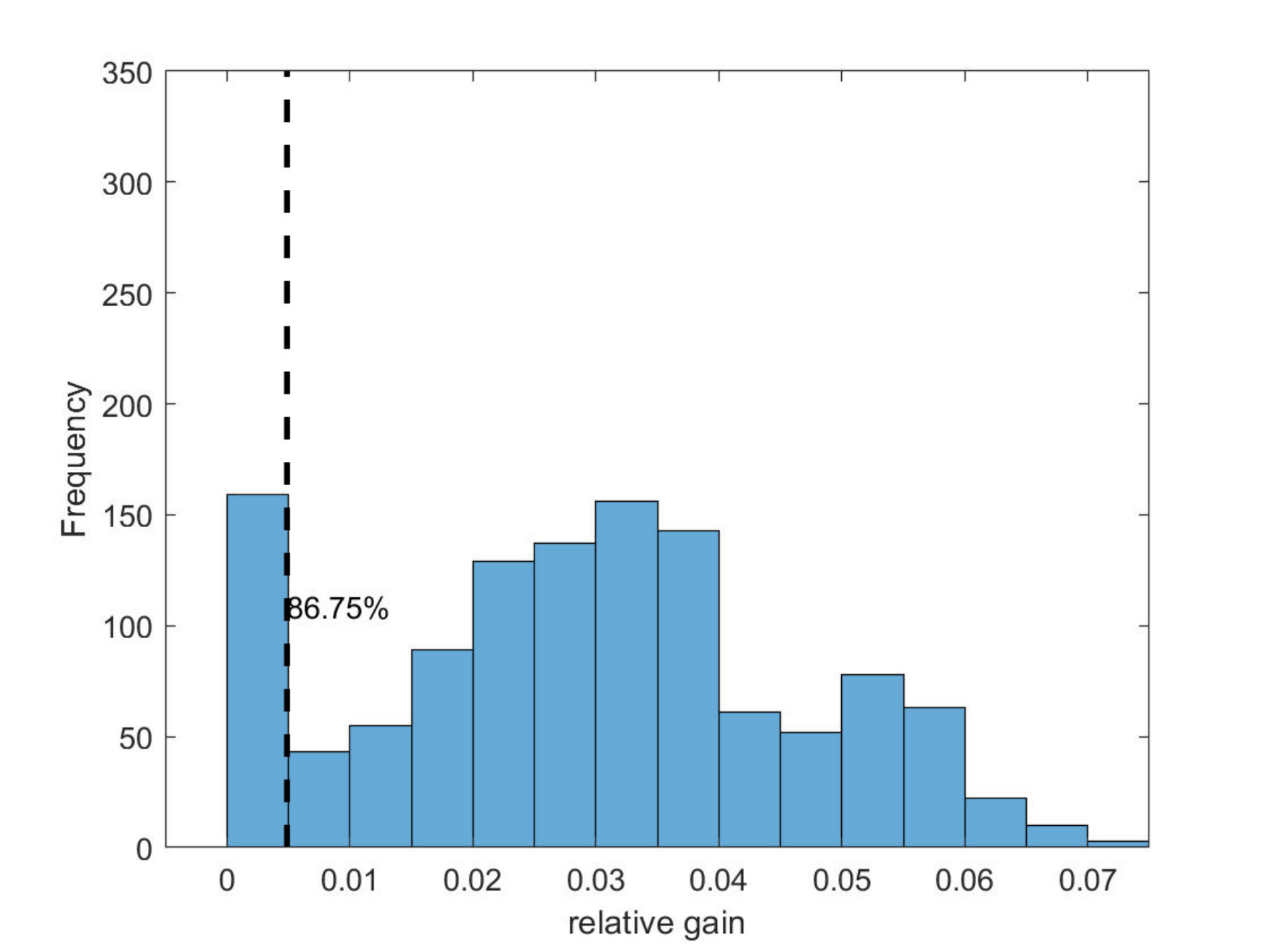}}
\caption{\small Improvement of solution quality by  $\mathbf{SIP}$-$\mathbf{perturb}$ with $T_{\theta}=1$ (see Fig.~\ref{flowchart}): Histograms of the relative gains (see Eq.~\eqref{eq:relative2}) for all $40\times 30$ runs.}
\label{fig::si2}
\end{figure}


Finally, we re-run $\mathbf{SIP}$-$\mathbf{perturb}$ with $T_{\theta}=200$ (see Fig. \ref{flowchart}) 40 times,
against the reference solutions generated using the well-known solver \texttt{Gurobi} (version 10) with a running time limit of 30 minutes for each graph instance. The default deterministic method in \texttt{Gurobi}, referred to as $\texttt{Gurobi}_1$, often yields less satisfactory results on most graphs compared to heuristic strategies. To improve it, we utilize the available heuristics in $\texttt{Gurobi}$ by setting "\texttt{NoRelHeurTime=timeLimit}" to obtain the method $\texttt{Gurobi}_2$. The minimum, mean, maximum objective function values obtained by $\mathbf{SIP}$-$\mathbf{perturb}$ as well as the average time in seconds are presented in Tables~\ref{tab:cheeger} and \ref{tab:sparsest}. It can be readily found there that $\mathbf{SIP}$-$\mathbf{perturb}$ consistently outperforms $\texttt{Gurobi}$ in providing better solutions on most graph instances. To be more specific, $\mathbf{SIP}$-$\mathbf{perturb}$ yields superior solutions compared to $\texttt{Gurobi}$ for all instances except G17, G35 and G53 for Cheeger cut, and except G24 and G45 for Sparsest cut. More importantly, as excepted, $\mathbf{SIP}$-$\mathbf{perturb}$ demonstrates notable runtime efficiency, requiring an average time of less than 75 seconds for each graph instance.

\begin{table}[htbp]
\centering
\caption{\small 
	Numerical results for Cheeger cut by $\mathbf{SIP}$-$\mathbf{perturb}$ with $T_{\theta}=200$ against \texttt{Gurobi} with a running time limit of 30 minutes for each graph. $\mathbf{SIP}$-$\mathbf{perturb}$ is rerun for 40 times, and the fourth, fifth, and sixth columns represent the minimum, average, and maximum objective function values, respectively. The seventh column displays the average time in seconds. The last two columns provide the results obtained by \texttt{Gurobi}$_1$ and \texttt{Gurobi}$_2$, 
	where $\texttt{Gurobi}_1$ denotes the default deterministic method in \texttt{Gurobi}, 
	and \texttt{Gurobi}$_2$ the heuristic method by setting "\texttt{NoRelHeurTime=timeLimit}". $\mathbf{SIP}$-$\mathbf{perturb}$ outperforms $\min(\texttt{Gurobi}_1, \texttt{Gurobi}_2)$ in terms of best Cheeger cut approximate values on all the graphs except for G17, G35 and G53.}
\small     
\renewcommand{\arraystretch}{1.0}
\setlength{\tabcolsep}{1.6mm}{
	\begin{tabular}{|c|c|c|cccc|c|c|}
		\hline
		\multirow{2}{*}{graph} & \multirow{2}{*}{$|V|$} & \multirow{2}{*}{$|E|$} & \multicolumn{4}{c|}{$\mathbf{SIP}$-$\mathbf{perturb}$}          & \multirow{2}{*}{\texttt{Gurobi}$_1$} & \multirow{2}{*}{\texttt{Gurobi}$_2$} \\ \cline{4-7}
		&                        &                        & min    & mean   & max    & time (seconds)   &                         &                      \\ \hline
		G1                        & 800                    & 19176                  & 0.3971 & 0.3992 & 0.4013 & 25.2925 & 0.4176                & 0.4031               \\
		G2                        & 800                    & 19176                  & 0.3979 & 0.3993 & 0.4012 & 24.1208 & 0.4132                & 0.4020               \\
		G3                        & 800                    & 19176                  & 0.3968 & 0.3990 & 0.4010 & 24.0715 & 0.4160                & 0.4036               \\
		G4                        & 800                    & 19176                  & 0.3972 & 0.3986 & 0.4000 & 28.3913 & 0.4295                & 0.3993               \\
		G5                        & 800                    & 19176                  & 0.3976 & 0.3986 & 0.3996 & 33.2754 & 0.4144                & 0.4037               \\
		G14                       & 800                    & 4694                   & 0.2367 & 0.2384 & 0.2410 & 10.9732 & 0.2474                & 0.2395               \\
		G15                       & 800                    & 4661                   & 0.2385 & 0.2412 & 0.2440 & 8.8301  & 0.2461                & 0.2396               \\
		G16                       & 800                    & 4672                   & 0.2323 & 0.2352 & 0.2391 & 9.9751  & 0.2456                & 0.2389               \\
		G17                       & 800                    & 4667                   & 0.2316 & 0.2349 & 0.2392 & 11.1397 & 0.2370                & 0.2295               \\
		G22                       & 2000                   & 19990                  & 0.3356 & 0.3388 & 0.3428 & 67.2310 & 0.3564                & 0.3404               \\
		G23                       & 2000                   & 19990                  & 0.3347 & 0.3360 & 0.3383 & 70.9517 & 0.3664                & 0.3435               \\
		G24                       & 2000                   & 19990                  & 0.3368 & 0.3395 & 0.3436 & 65.5364 & 0.3630                & 0.3411               \\
		G25                       & 2000                   & 19990                  & 0.3374 & 0.3405 & 0.3445 & 68.5895 & 0.3649                & 0.3431               \\
		G26                       & 2000                   & 19990                  & 0.3366 & 0.3387 & 0.3424 & 65.2483 & 0.3701                & 0.3421               \\
		G35                       & 2000                   & 11778                  & 0.2353 & 0.2388 & 0.2447 & 35.7837 & 0.2530                & 0.2348               \\
		G36                       & 2000                   & 11766                  & 0.2337 & 0.2364 & 0.2415 & 34.9933 & 0.2538                & 0.2381               \\
		G37                       & 2000                   & 11785                  & 0.2331 & 0.2346 & 0.2357 & 37.6673 & 0.2572                & 0.2369               \\
		G38                       & 2000                   & 11779                  & 0.2290 & 0.2306 & 0.2327 & 35.8389 & 0.2545                & 0.2322               \\
		G43                       & 1000                   & 9990                   & 0.3365 & 0.3384 & 0.3407 & 21.3137 & 0.3513                & 0.3417               \\
		G44                       & 1000                   & 9990                   & 0.3359 & 0.3391 & 0.3433 & 23.4580 & 0.3487                & 0.3449               \\
		G45                       & 1000                   & 9990                   & 0.3371 & 0.3389 & 0.3437 & 19.8516 & 0.3461                & 0.3435               \\
		G46                       & 1000                   & 9990                   & 0.3347 & 0.3368 & 0.3391 & 19.4566 & 0.3462                & 0.3409               \\
		G47                       & 1000                   & 9990                   & 0.3351 & 0.3371 & 0.3396 & 20.2874 & 0.3501                & 0.3421               \\
		G48                       & 3000                   & 6000                   & 0.0167 & 0.0175 & 0.0180 & 12.9493 & 0.0177                & 0.0200               \\
		G49                       & 3000                   & 6000                   & 0.0100 & 0.0100 & 0.0100 & 9.0241  & 0.0107                & 0.0100               \\
		G50                       & 3000                   & 6000                   & 0.0083 & 0.0083 & 0.0083 & 8.2793  & 0.0083                & 0.0083               \\
		G51                       & 1000                   & 5909                   & 0.2337 & 0.2351 & 0.2366 & 15.7767 & 0.2520                & 0.2376               \\
		G52                       & 1000                   & 5916                   & 0.2335 & 0.2385 & 0.2448 & 14.3792 & 0.2603                & 0.2396               \\
		G53                       & 1000                   & 5914                   & 0.2327 & 0.2370 & 0.2403 & 14.1083 & 0.2384                & 0.2310               \\
		G54                       & 1000                   & 5916                   & 0.2285 & 0.2301 & 0.2314 & 13.9340 & 0.2541                & 0.2390                     \\ \hline
	\end{tabular}
}
\label{tab:cheeger}
\end{table}

\begin{table}[htbp]
\centering
\caption{\small 
	Numerical results for Cheeger cut by $\mathbf{SIP}$-$\mathbf{perturb}$ with $T_{\theta}=200$ against \texttt{Gurobi} with a running time limit of 30 minutes for each graph. $\mathbf{SIP}$-$\mathbf{perturb}$ is rerun for 40 times, and the fourth, fifth, and sixth columns represent the minimum, average, and maximum objective function values, respectively. The seventh column displays the average time in seconds. The last two columns provide the results obtained by \texttt{Gurobi}$_1$ and \texttt{Gurobi}$_2$, 
	where $\texttt{Gurobi}_1$ denotes the default deterministic method in \texttt{Gurobi}, 
	and \texttt{Gurobi}$_2$ the heuristic method by setting "\texttt{NoRelHeurTime=timeLimit}". $\mathbf{SIP}$-$\mathbf{perturb}$ outperforms $\min(\texttt{Gurobi}_1, \texttt{Gurobi}_2)$ in terms of best Sparsest cut approximate values on all the graphs except for G24 and G45.}
\small     
\renewcommand{\arraystretch}{1.0}
\setlength{\tabcolsep}{1.6mm}{
	\begin{tabular}{|c|c|c|cccc|c|c|}
		\hline
		\multirow{2}{*}{graph} & \multirow{2}{*}{$|V|$} & \multirow{2}{*}{$|E|$} & \multicolumn{4}{c|}{$\mathbf{SIP}$-$\mathbf{perturb}$}          & \multirow{2}{*}{\texttt{Gurobi}$_1$} & \multirow{2}{*}{\texttt{Gurobi}$_2$} \\ \cline{4-7}
		&                        &                        & min    & mean   & max    & time (seconds)    &                         &                      \\ \hline
		G1                        & 800                    & 19176                  & 18.9850 & 19.0675 & 19.1100 & 8.5504  & 19.9225               & 19.2750              \\
		G2                        & 800                    & 19176                  & 18.9975 & 19.0624 & 19.1100 & 8.8842  & 20.1000               & 19.2350              \\
		G3                        & 800                    & 19176                  & 18.9900 & 19.0388 & 19.0975 & 8.5538  & 19.6775               & 19.2500              \\
		G4                        & 800                    & 19176                  & 19.0125 & 19.0292 & 19.0850 & 8.3018  & 19.4925               & 19.2500              \\
		G5                        & 800                    & 19176                  & 19.0050 & 19.0307 & 19.0550 & 8.2012  & 30.0000               & 19.3850              \\
		G14                       & 800                    & 4694                   & 2.7775  & 2.7827  & 2.7850  & 6.1621  & 2.9325                & 2.8500               \\
		G15                       & 800                    & 4661                   & 2.7750  & 2.7911  & 2.8050  & 6.5986  & 2.8725                & 2.7925               \\
		G16                       & 800                    & 4672                   & 2.6875  & 2.7539  & 2.7750  & 9.0342  & 2.9300                & 2.7775               \\
		G17                       & 800                    & 4667                   & 2.6575  & 2.6952  & 2.7625  & 9.6973  & 2.9075                & 2.6925               \\
		G22                       & 2000                   & 19990                  & 6.7040  & 6.7281  & 6.7410  & 18.0138 & 7.0000                & 6.7990               \\
		G23                       & 2000                   & 19990                  & 6.6870  & 6.6948  & 6.7120  & 13.6658 & 8.0000                & 6.8250               \\
		G24                       & 2000                   & 19990                  & 6.7110  & 6.7273  & 6.7460  & 13.9231 & 6.0000                & 6.8710               \\
		G25                       & 2000                   & 19990                  & 6.7140  & 6.7380  & 6.7680  & 18.9209 & 8.0000                & 6.8770               \\
		G26                       & 2000                   & 19990                  & 6.6970  & 6.7096  & 6.7350  & 14.3524 & 7.0000                & 6.8260               \\
		G35                       & 2000                   & 11778                  & 2.7280  & 2.7589  & 2.7870  & 62.4783 & 3.2290                & 2.7310               \\
		G36                       & 2000                   & 11766                  & 2.7460  & 2.7725  & 2.8040  & 21.1304 & 4.0000                & 2.7560               \\
		G37                       & 2000                   & 11785                  & 2.7400  & 2.7527  & 2.7600  & 28.0591 & 4.0000                & 2.7950               \\
		G38                       & 2000                   & 11779                  & 2.6760  & 2.6840  & 2.6990  & 74.6944 & 4.0000                & 2.7340               \\
		G43                       & 1000                   & 9990                   & 6.7060  & 6.7287  & 6.7560  & 7.2454  & 7.0000                & 6.7960               \\
		G44                       & 1000                   & 9990                   & 6.7220  & 6.7422  & 6.7780  & 7.6587  & 7.0000                & 6.8700               \\
		G45                       & 1000                   & 9990                   & 6.7060  & 6.7293  & 6.7560  & 7.9143  & 6.0000                & 6.9520               \\
		G46                       & 1000                   & 9990                   & 6.6860  & 6.6989  & 6.7140  & 8.2106  & 8.0000                & 6.9100               \\
		G47                       & 1000                   & 9990                   & 6.6920  & 6.7112  & 6.7300  & 7.5275  & 8.0000                & 6.8940               \\
		G48                       & 3000                   & 6000                   & 0.0667  & 0.0696  & 0.0720  & 4.6774  & 0.0933                & 0.0667               \\
		G49                       & 3000                   & 6000                   & 0.0400  & 0.0400  & 0.0400  & 3.2445  & 0.0426                & 0.0400               \\
		G50                       & 3000                   & 6000                   & 0.0333  & 0.0333  & 0.0333  & 2.9073  & 0.0360                & 0.0333               \\
		G51                       & 1000                   & 5909                   & 2.7500  & 2.7614  & 2.7820  & 11.2056 & 2.8980                & 2.8360               \\
		G52                       & 1000                   & 5916                   & 2.7400  & 2.7846  & 2.8500  & 20.7225 & 2.9120                & 2.8280               \\
		G53                       & 1000                   & 5914                   & 2.7200  & 2.7408  & 2.7680  & 43.3088 & 2.9960                & 2.7920               \\
		G54                       & 1000                   & 5916                   & 2.7100  & 2.7178  & 2.7260  & 9.9858  & 2.9420                & 2.8920                   \\ \hline
	\end{tabular}
}
\label{tab:sparsest}
\end{table}

\section{Conclusions}
\label{sec:conclusion}

Our motivation for developing the simple inverse power ($\mathbf{SIP}$) method is to address the limitations in the existing inverse power ($\mathbf{IP}$) method for approximating the balanced cut problem. The biggest advantage of
$\mathbf{SIP}$ over $\mathbf{IP}$ is that the inner subproblem of the former allows an explicit analytic solution while the latter does not. This inspires a boundary-detected subgradient selection, ensures the objective function values to strictly decrease during the iterations, and thus help $\mathbf{SIP}$ establish the local convergence, which $\mathbf{IP}$ lacks either. Meanwhile, we applied the same idea into solving a ternary valued $\theta$-balanced cut and obtained $\mathbf{SIP}_{\theta}$ which can be regarded as a perturbation of $\mathbf{SIP}$, thereby resulting into $\mathbf{SIP}$-$\mathbf{perturb}$, an efficient local breakout improvement of $\mathbf{SIP}$. We validated $\mathbf{SIP}$ and $\mathbf{SIP}$-$\mathbf{perturb}$ on G-set in terms of both computational cost and solution quality. 
For the future, 
it seems worthwhile to explore the potential of the Lov{\'a}sz extension in embedding combinatorial structures into continuous spaces and enabling an equivalent continuous formulation which may pave the way for a similar simple inverse power method. Furthermore, this paper also leaves some open questions, including: How can we characterize the difference between the classic equivalent formulation (see Eq.~\eqref{conti-prob:balance}) and the new one (see Eq.~\eqref{conti2-prob:balance})? Is there a theoretical approximation ratio for such simple inverse power method?

\end{document}